\newtheorem{lemma}[]{Lemma}
\newtheorem{thm}[]{Theorem}
\newtheorem{cor}[]{Corollary}
\begin{document}

\title{Oriented Book Embeddings}

\author{Stacey McAdams \footnote{Louisiana Tech University, Ruston, Louisiana, 71272, USA, Email:  smcadams@latech.edu} \qquad  Jinko Kanno \footnote{Louisiana Tech University, Ruston, Louisiana, 71272, USA, Email: jkanno@latech.edu}
}

\date{February 3, 2016}
\maketitle

\begin{abstract} A graph $G$ has a {\it $k$-page book embedding} if $G$ can be embedded into a {\it $k$-page book}.  The minimum $k$ such that $G$ has a $k$-page book embedding is the {\it book thickness} of $G$, denoted $bt(G)$. Most of the work on this subject has been done for unoriented graphs and oriented acyclic graphs (no directed cycles). In this work we discuss oriented graphs $\overrightarrow{D}$ containing directed cycles by using {\it oriented book embeddings} and {\it oriented book thickness}, $obt(\overrightarrow{D})$. To characterize $\overrightarrow{D}$ such that $obt(\overrightarrow{D}) = k$, we define the class $\mathcal{M}^k$ of {\it $k$-page critical oriented graphs} to be all oriented graphs $\overrightarrow{D}$ with $obt(\overrightarrow{D}) =k$, but for every proper oriented subgraph of $\overrightarrow{D}$, denoted $\overrightarrow{D}'$, we have that $obt(\overrightarrow{D}') < k$.  Determining $\mathcal{M}^k$ for general $k$ is challenging; we narrow down the list of oriented graphs in $\mathcal{M}^k$ for small $k$. In this work we show complete lists for $\mathcal{M}^1$ and for $\mathcal{M}^2 \cap \mathcal{U}$, where $\mathcal{U}$ consists of all \textit{strictly dicyclic oriented graphs}, that is, oriented graphs containing exactly one oriented cycle, which is a directed cycle. 

Keywords:  book embedding, book thickness, oriented book embedding, oriented book thickness, directed cycle, critical graph
 \end{abstract}

\section{Introduction}

%In this paper we present Bernhart and Kainen's formal definition of a {\it book embedding} for an unoriented graph, found in \cite{BK} and show an important result that fueled our interest in this work. We will then produce our definition of an {\it oriented book embedding} of an oriented graph, which considers the direction of the arcs in the embedding itself. 

In this work, every graph is a simple graph, i.e., a graph containing no loops or multiple edges.  We follow the notation and terminology in \cite{West}, in particular, we denote a complete graph on $n$ vertices as $K_n$, and a complete bipartite graph having two partite sets with sizes $m$ and $n$ as $K_{m,n}$. We consider orientations of simple graphs as described below. 

Given a graph $D$, with finite vertex set $V(D)$ and finite edge set $E(D)$, we define an {\it oriented graph} as follows. For each pair of vertices $u$ and $v$ in $V(D)$ such that there exists an edge $e = uv$ in $E(D)$, we assign $u$ (resp. $v$) to be the {\it tail} and $v$ (resp. $u$) to be the {\it head}. This assignment results in an {\it arc}, where the tail {\it is directed} to the head. We denote the arc $a$ with tail $u$ and head $v$ as $(u,v)$, and call $u$ and $v$ the {\it endpoints} of $a$. The endpoints of an arc are {\it adjacent} and an arc is {\it incident} to both of its endpoints. We denote the resulting oriented graph $\overrightarrow{D}$ with arc set $A(\overrightarrow{D})$ and vertex set $V(\overrightarrow{D})$, which is equal to $V(D)$. Given an oriented graph $\overrightarrow{D}$, the {\it underlying graph} $D$ is the graph such that the directions associated to the arcs of $\overrightarrow{D}$ are removed, yielding edges. An {\it oriented subgraph} of an oriented graph $\overrightarrow{D}$ is an oriented graph $\overrightarrow{D'}$ such that $V(\overrightarrow{D'}) \subseteq V(\overrightarrow{D})$, $A(\overrightarrow{D'}) \subseteq A(\overrightarrow{D})$ and for an arc $a = (u,v) \in A(\overrightarrow{D'})$, it must be true that $a \in A(\overrightarrow{D})$ and $ \{ u,v \} \subseteq V(\overrightarrow{D'})$. A \textit{proper oriented subgraph} of an oriented graph $\overrightarrow{D}$ is an oriented subgraph $\overrightarrow{D'}$ of $\overrightarrow{D}$ such that either $V(\overrightarrow{D'}) \subsetneqq V(\overrightarrow{D})$ or $A(\overrightarrow{D'}) \subsetneqq A(\overrightarrow{D})$.

Given a graph $G$, if two vertices $u$ and $v$ are adjacent in $G$, i.e., if there is an edge $e = uv \in E(G)$, we say that $u$ and $v$ are {\it neighbors} in $G$, and denote the neighborhood of a vertex $u$ as $N(u)$. For an oriented graph $\overrightarrow{D}$ containing an arc $(u,v)$, we say that $u$ is an {\it in-neighbor} of $v$ and that $v$ is an {\it out-neighbor} of $u$. For a vertex $u \in V(\overrightarrow{D})$, we denote the set of all in-neighbors of $u$ as $N^-(u)$, and the set of all out-neighbors of $u$ as $N^+(u)$. We also say that the {\it in-degree} of $u$ is $|N^-(u)|$ and the {\it out-degree} of $u$ is $|N^+(u)|$. The {\it neighborhood} of $u$ is defined to be $N(u) := N^+(u) \cup N^-(u)$, and the {\it degree} of $u$ is $|N^-(u) \cup N^+(u)|$. If $N(u) = N^-(u)$, then $u$ is said to be a {\it sink}; if $N(u) = N^+(u)$, then $u$ is said to be a {\it source}.  

For a graph $D$ with edge $e$, the {\it deletion of $e$} is an operation that yields a graph with edge set $E(D) \setminus e$ and vertex set $V(D)$. We denote the graph $D$ with edge $e$ deleted as $D \setminus e$. The deletion of an arc $a$ in an oriented graph $\overrightarrow{D}$ is defined analogously, and is denoted $\overrightarrow{D} \setminus a$. The {\it converse} of an arc $a= (u,v)$ is the arc $(v,u)$, denoted $a^*$. If we replace an arc $a$ with its converse $a^*$, we obtain an oriented graph with arc set $A(\overrightarrow{D} \setminus a) \cup a^*$ and vertex set $V(\overrightarrow{D})$; we denote the resulting oriented graph $\overrightarrow{D}(a^*)$. We call this replacement {\it switching the direction of a}. If we switch the direction of every arc in an oriented graph $\overrightarrow{D}$, we call the resulting oriented graph the \textit{converse} of $\overrightarrow{D}$ and denote it $\overrightarrow{D}^*$. For a graph $D$ with vertex $v$, the {\it deletion of v} is an operation that yields a graph with vertex set $ V(D) \setminus v$ and edge set $E(D) \setminus \{e_1, e_2,...,e_k \}$, such that $e_i$, $1 \le i \le k$, is incident to $v$. We denote the graph $D$ with vertex $v$ deleted as $D-v$. The deletion of a vertex $v$ in an oriented graph $\overrightarrow{D}$ is defined analogously and is denoted $\overrightarrow{D}-v$.

For $n \in \mathbb{N}$, let $[n]$ denote $ \{ 1,2,...,n \}$. For $n \geq 2$, the standard path, denoted $P_n$, is the graph with $V(P_n) = \{i \, \vert \, i \in [n] \}$ and $E(P_n)$ consisting of the edges with endpoints $i$, $i+1$ for $1 \le i \le n-1$. A {\it path on $n$ vertices} (resp. an {\it n-path}) is a graph isomorphic to $P_n$ (for some $n$). A graph is \textit{connected} if each pair of vertices belongs to a path; an oriented graph is connected if its underlying graph is connected. If a graph is not connected, it is \textit{disconnected} and its maximal connected subgraphs are called \textit{components}. The standard cycle, denoted $C_n$, is the graph $P_n$ with the added edge having endpoints $1$ and $n$. A {\it cycle on $n$ vertices} (resp. an {\it n-cycle}) is a graph isomorphic to $C_n$ (for some $n$). 

The  {\it standard directed path}, $\overrightarrow{P_n}$, is the oriented graph whose underlying graph is isomorphic to $P_n$, and for each edge $ \{i, i+1 \}$, we have the arc $(i, i+1)$. An oriented graph isomorphic to $\overrightarrow{P_n}$ is called an {\it n-dipath}. The {\it standard directed cycle}, denoted $\overrightarrow{C_n}$, is the oriented graph consisting of $\overrightarrow{P_n}$ and the arc $(n,1)$. An oriented graph isomorphic to $\overrightarrow{C_n}$ is referred to as an {\it n-dicycle}. One may notice that we specifically stated {\it directed} path or {\it directed} cycle. This is because the directions of the arcs ``follow'' each other along the path or the cycle. If this is not the case, and the underlying graph is isomorphic to a path or a cycle, but arbitrarily directed, we refer to each as an {\it oriented n-path} or {\it oriented n-cycle}, respectively. 

For an integer $k \geq 0$, a {\it $k$-book}, or {\it a book with $k$ pages}, consists of a line $L$ in $\mathbb{R}^3$, called the {\it spine}, where $L$ is identified with the $z$-axis, i.e.  $L = \{(0,0,z) : z \in \mathbb{R} \}$ and $k$ distinct closed half planes, called {\it pages}, whose common boundary is $L$. If $p$ is a page, then $p = \{(0,y,z) : y,z \in \mathbb{R}, y \geq 0 \}$.  The {\it interior} of a page $p$ is the open half plane, $p \setminus L$. A {\it $k$-page book embedding} is an embedding of a graph $G$ into a $k$-book such that:
\begin{itemize}
\item each vertex $v \in V(G)$ is embedded into $L$;
\item each edge $e \in E(G)$ is either completely embedded into $L$, or all points of $e$ (except the endpoints) are embedded into the interior of a single page. 

\end{itemize}

The {\it spine order}, for a particular book embedding, is the ordering of the vertices embedded into the spine whose $z$-coordinates are strictly increasing or decreasing. We can label the vertices embedded into the spine $v_1, v_2,..., v_n$. If two edges $v_iv_j$, with $i<j$ and $v_lv_m$, with $l<m$ are embedded into the same page, then it must be true that either $i<j<l<m$, $l<m<i<j$, $i<l<m<j$, or $l<i<j<m$. In this case, we say that the two edges follow the {\it planarity rule}. The {\it book thickness of a graph}, denoted $bt(G)$ is the minimum $k$ required for $G$ to have a $k$-page book embedding. 

For a particular book embedding of a graph $D$, we call an edge in $D$ that is embedded into the spine a {\it tight edge}, and we call an edge in $D$ that is not a tight edge a {\it loose edge}. In a given book embedding of a graph $D$, we call a vertex $v$ of $D$ a {\it tight vertex} if it is the common endpoint of exactly two tight edges; we call $v$ a {\it half-loose vertex} if it is the endpoint of exactly one tight edge, and we call $v$ a {\it loose vertex} if it is the endpoint of no tight edge. For a particular book embedding of a graph $D$, we say that an edge $uv$ in $D$ {\it covers} a vertex $x \in V(D)$ if $x$ is between $u$ and $v$ in the spine order of the embedding. For two arcs $a_1,a_2$ that are in the same page, if $a_1$ and $a_2$ share an endpoint, and $a_1$ covers the other endpoint of $a_2$, or if $a_1$ covers both endpoints of $a_2$, we say that $a_1$ and $a_2$ are \textit{nested} or that \textit{$a_2$ is nested inside $a_1$}.

A {\it planar graph} is a graph that has an embedding in the plane, and an {\it outerplanar graph} is a graph that has an embedding in the plane with every vertex on the unbounded face. A graph is {\it hamiltonian} if it contains a cycle that passes through every vertex of the graph.

\begin{thm}$($Bernhart and Kainen, $1979$ ~\cite{BK} $)$ Let $G$ be connected, then the following hold:
\noindent $1.$ $bt(G) = 0$ if and only if $G$ is a path, and \\
$2.$ $bt(G) \le 1$ if and only if $G$ is outerplanar. \\
$3.$ $bt(G) \le 2$ if and only if $G$ is a subgraph of a hamiltonian planar graph.
\label{BKthm}
\end{thm}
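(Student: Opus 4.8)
The plan is to establish the three statements separately, in each case moving between the book picture and a planar or spherical picture. For statement~1, I would argue directly. A $0$-page embedding has no pages, so every edge of $G$ is a tight edge lying on the spine $L$; writing $v_1,\dots,v_n$ for the spine order, each edge is then embedded as a subarc of $L$ joining two of these points, and since the relative interior of an edge may contain no vertex, the only possible edges are $v_iv_{i+1}$ for $1\le i\le n-1$. Connectedness forces all of them to be present, so $G=P_n$; conversely $P_n$ obviously admits a $0$-page embedding.

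For statement~2 the key observation is that a single page together with its bounding spine is homeomorphic to a closed disk with one boundary point deleted. Capping off that point, a $1$-page book embedding of $G$ becomes an embedding of $G$ in a closed disk with every vertex on the bounding circle --- that is, an outerplanar embedding --- and the planarity rule says exactly that the chords are pairwise non-crossing. Conversely, given an outerplanar $G$, I would pass to a maximal outerplanar supergraph $H$ (a triangulated polygon, whose boundary is a Hamiltonian cycle $u_1u_2\cdots u_nu_1$), place the $u_i$ on the spine in this cyclic order, realize $u_1u_2,\dots,u_{n-1}u_n$ as tight edges and every remaining edge of $H$ (the closing edge $u_nu_1$ and the triangulation diagonals) as a loose edge in the single page; these are non-crossing chords of the polygon, so they obey the planarity rule, and restricting the embedding to $G$ finishes it.

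Statement~3 I would handle on the sphere. A $2$-book, capped at infinity, is the union of two closed disks glued along a common boundary circle $\widehat L$, so a $2$-page book embedding of $G$ is precisely an embedding of $G$ into $S^2$ with all vertices on $\widehat L$ and each loose edge in the interior of one of the two disks. From such an embedding, add to $G$ the $n$ arcs into which the spine vertices cut $\widehat L$ (including the one through infinity); after a routine adjustment to avoid multiple edges --- first pushing any loose edge between two spine-consecutive vertices down onto the spine --- this exhibits, inside $S^2$, an embedding of $G$ together with a Hamiltonian cycle $C$, so $G\cup C$ is a planar Hamiltonian graph containing $G$. Conversely, let $H\supseteq G$ be planar with Hamiltonian cycle $C$, and fix a planar embedding of $H$ on $S^2$. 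By the Jordan curve theorem $C$ separates $S^2$ into two open disks, and since both endpoints of every edge of $H$ lie on $C$, each edge not on $C$ lies in the closure of exactly one of them; declaring $\widehat L:=C$ the spine (linearized by cutting at any one of its arcs), the two disks the two pages, and routing each edge of $G$ into the page whose disk contains it, yields a $2$-page book embedding, the planarity rule being inherited from the non-crossing of edges within each disk.

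The main obstacle I expect is internal to statement~3: making rigorous, in the forward direction, that the closing arc through infinity can be added without a crossing, and, in the backward direction, that the edges lying in a fixed disk genuinely satisfy the linear planarity rule once the boundary circle is cut. Both reduce to the same small fact --- that a family of pairwise non-crossing chords of a circle stays pairwise non-crossing (each pair nested or separated) after the circle is cut at any point that is not an endpoint of a chord --- together with the bookkeeping needed to keep $G\cup C$ simple. Isolating and proving that lemma cleanly is where most of the real work sits; the two topological identifications (page $\leftrightarrow$ punctured disk, and $2$-book $\leftrightarrow$ two disks glued along a circle) should then be routine.
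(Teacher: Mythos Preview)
The paper does not prove this theorem at all: it is stated with attribution to Bernhart and Kainen~\cite{BK} and then simply used as a tool elsewhere (e.g., in the proof of Theorem~\ref{cyclic_on_C} and in identifying $K_4$ and $K_{2,3}$ as $2$-page critical). So there is no ``paper's own proof'' to compare your proposal against.

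For what it is worth, your sketch is the standard argument and is essentially the one in the original Bernhart--Kainen paper: statement~1 is immediate, statement~2 goes through the identification of a single page with a disk and the characterization of maximal outerplanar graphs as triangulated polygons, and statement~3 uses the Jordan curve separation of the sphere by a Hamiltonian cycle into two disks that become the two pages. The bookkeeping issues you flag (simplicity of $G\cup C$, and that non-crossing chords of a circle remain nested/separated after cutting the circle at a non-endpoint) are real but routine, exactly as you say.
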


We call a graph $G$ a {\it $k$-page critical graph} if $bt(G) = k$, and for every edge $e \in E(G)$, we have that $bt(G \setminus e) = k-1$. By Kuratowski's Theorem~\cite{K} and Theorem~\ref{BKthm},  $K_4$ and $K_{2,3}$ are $2$-page critical graphs. All $k$-page critical graphs are connected; since, if $G$ was $k$-page critical with distinct components $H_1,H_2$, then  $bt(G) = max \{bt(H_1), bt(H_2) \} =k$. Assume $bt(G) = bt(H_1)$, and if we delete an edge $e' \in E(H_2)$, we have $bt(G \setminus e') =k$, and thus $G$ is not a $k$-page critical graph.

We now focus on embedding an oriented graph into a $k$-book. Since the spine $L$ of a $k$-book is identified with the $z$-axis, for convention we say that the spine is oriented upwards. We can then describe the orientation of the arcs embedded into the $k$-book as being {\it upwards} or {\it downwards}, relative to the spine. A {\it $k$-page oriented book embedding} for an oriented graph $\overrightarrow{D}$ is an embedding into a $k$-book such that:
\begin{itemize}
\item each vertex $v \in V(\overrightarrow{D})$ is embedded into $L$;
\item each arc $a \in A(\overrightarrow{D})$ is either completely embedded into $L$, or all points of $a$ (except the endpoints) are embedded into the interior of a single page. 
\item the orientation of all arcs embedded into $L$ agree;
\item the orientation of all arcs embedded into a given page agree. (We call this restriction the {\it direction rule}.)

\end{itemize}

If all arcs embedded into a single page are upwards, we refer to the page as an \textit{upwards page}, or simply say the \textit{page is upwards}; similarly if all arcs embedded into a single page are downwards, we refer to the page as an \textit{downwards page}, or simply say the \textit{page is downwards}. We do not restrict the direction of all arcs in the oriented book embedding to be the same, just those in the same page; an oriented book embedding may have an upwards page and a downwards page. The {\it oriented book thickness}, denoted $obt(\overrightarrow{D})$ is the minimum $k$ such that $\overrightarrow{D}$ has a $k$-page oriented book embedding. From the definition of oriented book thickness and its requirements, for an oriented graph $\overrightarrow{D}$ with underlying graph $D$, we have that $bt(D) \le obt(\overrightarrow{D})$. The definitions for spine order, tight or loose arcs, and whether an arc covers a vertex in a particular oriented book embedding are all similar to their definitions for book embeddings in the unoriented case.

We call an oriented graph $\overrightarrow{D}$ a {\it $k$-page critical oriented graph} if $obt(\overrightarrow{D}) = k$, and for every arc $a \in A(\overrightarrow{D})$, we have that $obt(\overrightarrow{D} \setminus a) = k-1$. The class of all $k$-page critical oriented graphs is denoted $\mathcal{M}^k$.

% A subclass of $\mathcal{M}^k$ is the class of oriented graphs such that $obt(\overrightarrow{D}(a^*)) = k-1$; in this case we call $\overrightarrow{D}$ a {\it $2$-page switching critical graph} and denote the class of all $k$-page switching critical oriented graphs as $\mathcal{M}^k_s$

%An orientation given to a complete graph $K_n$, is called a {\it tournament} of $K_n$. Unlike paths and cycles, we do not have a specific directed case for tournaments. All possible tournaments of $K_4$ are shown in Figure~\ref{K_4_tournaments}. Recall that $K_4$ and $K_{2,3}$ are 2-page critical graphs. Three of the four tournaments of $K_4$ are in $\mathcal{M}^2$. 

% The fourth tournament contains a member of $\mathcal{M}^2_s$ as an oriented subgraph, which will be shown in Section   . Each orientation of $K_{2,3}$ is in $\mathcal{M}^2$. 

%\begin{figure}[hbtp] \centering
%\includegraphics[scale=.5]{K_4_tournaments.eps}
%\label{K_4_tournaments}
%\caption{The four non-isomorphic tournaments of $K_4$}
%\end{figure}

Determining $\mathcal{M}^k$ for general $k$ is challenging; we narrow down the list of oriented graphs in $\mathcal{M}^k$ for small $k$. In Section 3 we show complete list for $\mathcal{M}^1$, and in Section 4 we discuss $\mathcal{M}^2$. For undirected graphs, there are exactly two $2$-page critical graphs, $K_4$ and $K_{2,3}$; however, in the oriented case is becomes more complicated. We show a complete list for $\mathcal{M}^2 \cap \mathcal{U}$, where $\mathcal{U}$ consists of all \textit{strictly dicyclic oriented graphs}, that is, oriented graphs containing exactly one oriented cycle, which is a directed cycle. To prepare for the proofs in Section 4, we first discuss the oriented book embeddings of \textit{oriented cycles} and \textit{oriented trees} in Section 2.

%For undirected graphs, there are exactly two $2$-page critical graphs, $K_4$ and $K_{2,3}$; however, in the oriented case, it becomes more challenging. In this work we focus on oriented graphs in $\mathcal{M}^2$ that are \textit{strictly unidicyclic graphs}, that is, containing exactly $1$ oriented cycle that is a directed cycle. We have two main results. The first, Theorem~\ref{cyclic_on_C}, characterizes the spine order of every $1$-page book embedding of an (undirected) cycle. The second, Theorem~\ref{unicyclic_list}, characterizes the class of \textit{strictly unidicyclic graphs} in $\mathcal{M}^2$. 

%We now wish to begin describing the class $\mathcal{M}^2$ of 2-page critical oriented graphs, other than the three tournaments of $K_4$ and each orientation of $K_{2,3}$. By Theorem~\ref{BKthm}, $bt(D) \le 1$ if and only if $D$ is outerplanar, and if the underlying graph of an oriented graph is outerplanar~\cite{CH}, then it contains a subdivision of $K_4$ or $K_{2,3}$ as a subgraph. Therefore we only need to consider oriented graphs whose underlying graph is outerplanar. We call these graphs {\it outerplanar oriented graphs}. We first discuss oriented book embeddings of {\it oriented cycles} (Section 2) and {\it oriented trees} (Section 3). In Section 4, which contains the main result Theorem~\ref{unicyclic_list}, we discuss {\it strictly uni-dicyclic oriented graphs}, that is, oriented graphs containing exactly one oriented cycle, which is a directed cycle. We have proved a complete list of strictly uni-dicyclic graphs in $\mathcal{M}^2$. 

\section{Oriented Book Embeddings of Cycles and Trees}
In this section we discuss the oriented book embeddings of two fundamental types of oriented graphs, oriented cycles and oriented trees. We will use these oriented graphs to construct strictly uni-dicyclic graphs in Section 4.

\subsection{Undirected and Oriented Cycles}
We now discuss oriented book embeddings of oriented cycles. We first discuss book embeddings of (undirected) cycles in Theorem~\ref{cyclic_on_C}, and characterize the spine order of every $1$-page book embedding of a cycle. We then describe all possible $1$-page oriented book embeddings of directed cycles in Corollary~\ref{directed cycle}. 

Let $C$ be a cycle on $n$ vertices. By definition, there exists an isomorphism $\phi: [n] \rightarrow V(C)$ such that the edge set of $C$ consists of $\phi(i) \phi(i+1)$ for $i \in [n-1]$ and edge $\phi(1) \phi(n)$. For $n \in \mathbb{N}$ and $k \in \mathbb{Z}$, a {\it cyclic permutation} of $V(C)$ is a mapping $\phi(i) \rightarrow \phi(i+k)$ $(mod$ $n)$, $i \in [n]$.  We call each cyclic permutation of $V(C)$ a {\it natural ordering} of $V(C)$.

%Let $C$ be a cycle on $n$ vertices. By definition, there exists an isomorphism $\phi$ from the standard cycle $C_n$ with $V(C_n) = [n]$. For every isomorphism $\phi$, we call the ordering of $V(C)$, $\phi(1),...,\phi(n),$ an {\it $[n]$-ordering} of $V(C)$.  

%For every isomorphism $\phi$, we call the ordering of $V(C)$, $\phi(1),...,\phi(n),$ a {\it natural ordering} of $V(C)$.  

%Let us label the vertices of a cycle $C$ of length $n$, $n \geq 3$. Choose an arbitrary vertex and label it $1$. Beginning at vertex $1$, follow the cycle, either clockwise or counterclockwise, to a neighbor of $1$, and label it $2$. Vertex $2$ has exactly one other unlabeled neighbor, label it $3$. Continue this process until each vertex has a label, $1,2,...,n$. By this labeling, each vertex $i$, $1 < i < n$ has neighbors $i \pm 1$. To complete the cycle, $1$ and $n$ are also adjacent. 

%To find a spine order for a book embedding of $C$, we need to find a linear ordering of the vertices of $C$. For $i$, $1 \le i \le n-1$, we call a {\it cut between i and i+1} the deletion of the edge between $i$ and $i+1$, resulting in the path $i,i-1,...,2,1,n,...,i+1$. This path can also be written $i+1,i+2,...,n,1,2,...,i$. A {\it cut between $1$ and $n$} is the deletion of the edge between 1 and $n$, resulting in the path $1,2,...,n$ or $n,n-1,...,1$. If the spine order of a book embedding of $C$ follows the order of a path resulting from a cut in $C$, we say that the spine order is {\it cyclic on $C$}.

\begin{thm}
\label{cyclic_on_C}
Let $C$ be a cycle having a $k$-page book embedding. Then $k$ is minimal if and only if the spine order is a natural ordering of $V(C)$. 
\end{thm}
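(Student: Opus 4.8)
The plan is to prove both directions of the biconditional, where "$k$ is minimal" means $k = bt(C)$; by Theorem~\ref{BKthm}, a cycle is outerplanar but not a path, so $bt(C) = 1$, and the claim becomes: a $1$-page book embedding of $C$ exists with a given spine order if and only if that spine order is a natural ordering of $V(C)$.

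For the "if" direction, I would take a natural ordering $\phi(1+k), \phi(2+k), \dots, \phi(n+k)$ (indices mod $n$) and exhibit the $1$-page embedding explicitly: place the vertices on the spine in this order and route every edge of $C$ on the single page. I must check the planarity rule for each pair of edges. Since the cyclic sequence of vertices along the spine is exactly the cyclic sequence of vertices around $C$, consecutive edges of $C$ share an endpoint and non-consecutive edges $\phi(i)\phi(i+1)$ and $\phi(j)\phi(j+1)$ are "nested" in the sense that one lies entirely within the spine-interval determined by the other (or the intervals are disjoint) — this is precisely the outerplanar drawing of a cycle as a convex polygon. So the planarity rule holds and we get a valid $1$-page book embedding.

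For the "only if" direction, suppose $C$ has a $1$-page book embedding with spine order $v_1, v_2, \dots, v_n$; I want to show this order is a natural ordering of $V(C)$. The key combinatorial fact is that in a $1$-page embedding, the edges, together with the spine segments between consecutive spine-vertices, must form a planar (outerplanar) drawing, and $C$ being a single cycle of length $n$ with $n$ edges on $n$ vertices forces a rigid structure. I would argue as follows: consider the outermost edge $v_1 v_n$ (if it is present) or more robustly, induct on $n$; take the vertex $v_1$ (the first in spine order), which has exactly two neighbors $a, b$ in $C$. Any edge of $C$ incident to $v_1$ covers no vertex to the left of $v_1$, and for the two edges $v_1 a$ and $v_1 b$ to not violate the planarity rule with the rest of the cycle, one shows $a = v_2$ or $b = v_2$ — i.e., $v_1$'s spine-neighbor $v_2$ must be one of its cycle-neighbors. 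Then contract the edge $v_1 v_2$ (or delete $v_1$ and join its other neighbor appropriately) to get a shorter cycle with an induced $1$-page embedding, and apply the inductive hypothesis. Unwinding the induction shows the spine order traverses $C$ consecutively, hence is a natural ordering.

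The main obstacle is the "only if" direction — specifically, pinning down cleanly why $v_1$'s spine-successor $v_2$ must be a $C$-neighbor of $v_1$, and setting up an induction (or a direct argument via the interval/nesting structure of the $n$ edges) that does not get tangled in edge cases when, say, the edge $v_1 v_n$ is or is not present. I expect to handle this by observing that the $n$ edges of $C$ drawn on one page form a set of pairwise non-crossing chords, and a Hamiltonian-cycle-like counting argument (each vertex has degree $2$, the chords are non-crossing, there are $n$ of them) forces the chord structure to be exactly that of a convex polygon whose boundary order matches $C$; I would make this precise rather than leaning on intuition. Once the spine order is shown to follow $C$ consecutively, noting that any such traversal of $C$ is obtained from the standard one by a cyclic permutation (and possibly a reflection, which is itself realized by reversing, hence still "natural" up to the isomorphism $\phi$) completes the proof.
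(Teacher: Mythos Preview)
Your plan is sound and can be completed, but the paper argues the ``only if'' direction quite differently. Rather than induct on $n$, it works directly by contradiction: assuming the spine order is not natural, some cycle-edge $\phi(i)\phi(i+1)$ is loose, and the paper picks an \emph{innermost} such edge (one whose spine-interval contains no other loose cycle-edge). It then propagates along the cycle using the planarity rule --- any vertex $t$ covered by $\phi(i)\phi(i+1)$ has each cycle-neighbour either also covered or equal to $\phi(i)$ or $\phi(i+1)$ --- so walking around $C$ from $t$ forces every vertex other than $\phi(i),\phi(i+1)$ to lie between them on the spine, and the innermost choice then yields the natural ordering. Your inductive route is a genuine alternative; the crux you flag (that $v_2$ must be a $C$-neighbour of $v_1$) does go through via the counting you sketch: if $v_1$'s nearest $C$-neighbour on the spine is $v_i$ with $i>2$, then $v_2,\dots,v_{i-1}$ are trapped under the edge $v_1v_i$, so all their $C$-edges land in $\{v_2,\dots,v_i\}$, forcing $i-2$ edges on $i-2$ vertices inside an acyclic induced subgraph of $C$ --- a contradiction. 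The paper's approach buys brevity (no contraction bookkeeping, no inductive hypothesis to maintain); yours buys a reusable modular lemma and makes the outerplanar ``convex polygon'' intuition explicit.
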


%Let $C$ be a cycle. A minimal page book embedding of $C$ can be obtained if and only if the spine order is an $[n]$-ordering of $V(C)$.

%Let $C$ be a cycle. A book embedding of $C$ requires a minimal number of pages if and only if there is an isomorphism from the spine order to $[n]$.

\begin{proof}
Since $C$ is not a path, by Theorem~\ref{BKthm}, $bt(C) \geq 1$ and since $C$ is outerplanar, $bt(C) \leq 1$. Therefore $k$ is minimal when $k=1$. We need only prove the necessary condition, as the sufficient condition is trivial. 

To prove the necessary condition by contradiction, choose a counterexample, that is, suppose that there exists a 1-page book embedding of $C$ such that $(1)$ the spine order is not a natural ordering of $V(C)$; then there exists at least one pair of vertices, $\phi(i)$, $\phi(i+1)$ in $C$ that are not consecutive in the spine, implying that the edge $\phi(i) \phi(i+1)$ is a loose edge. Considering all such pairs, $(2)$ choose $i$ such that no other such pair exists between $\phi(i)$ and $\phi(i+1)$ in the spine. By $(1)$, there is at least one vertex $t$ embedded between $\phi(i)$ and $\phi(i+1)$ in the spine. Since $t$ is covered by the edge $\phi(i) \phi(i+1)$, by the planarity rule, vertex $t+1$ or vertex $t-1$ must also be covered by $\phi(i) \phi(i+1)$. If $\{t+1, t-1 \} = \{ \phi(i), \phi(i+1) \}$, then $n =3$; every spine order of three vertices is a natural ordering, and we obtain a contradiction. Therefore, either $t+1$ or $t-1$ is not in $\{ \phi(i), \phi(i+1) \}$. By continuing this logic for an arbitrary vertex $t$, every vertex except $\phi(i)$ and $\phi(i+1)$ is embedded between $\phi(i)$  and $\phi(i+1)$; otherwise, the planarity rule is violated. By $(2)$, the spine order must be a natural ordering of $V(C)$. \end{proof}

% Suppose $t= \phi(i+2)$. Then vertex $\phi(i+3)$ must also be between $\phi(i)$ and $\phi(i+1)$ in the spine and must be next to $\phi(i+2)$, otherwise we have a contradiction to $(3)$. If we continue this logic, all vertices $t \neq \phi(i), \phi(i+1)$ must appear between $\phi(i)$ and $\phi(i+1)$ in the spine, and in between their neighbors. This yields the spine order $\phi(i), \phi(i-1),..., \phi(n), \phi(n-1),..., \phi(i+1)$ from top to bottom, which is a natural ordering of $V(C)$. Thus $t \neq \phi(i+2)$. Similarly, $t \neq \phi(i-1)$. Since $\phi(i+2)$ and $\phi(i-1)$ are not between $\phi(i)$ and $\phi(i+1)$ in the spine, neither are $\phi(i+3)$ and $\phi(i-2)$. Continuing this logic, no vertex $t$ can be between $\phi(i)$ and $\phi(i+1)$, contradicting our hypothesis, the existence of a counterexample. Thus the statement holds. 

%Since, for an oriented graph $\overrightarrow{D}$ with underlying graph $D$, we have that $bt(D) \le obt(\overrightarrow{D})$, Theorem~\ref{cyclic_on_C} also holds for both oriented and directed cycles. Since a directed cycle is a specific type of oriented cycle, we have the following corollary of Theorem~\ref{cyclic_on_C} which describes $1$-page oriented book embeddings of directed cycles.

Corollary~\ref{directed cycle} follows from Theorem~\ref{cyclic_on_C} and provides the location of the arcs in a $1$-page oriented book embedding of a directed cycle.

\begin{cor}
\label{directed cycle}
Let $\overrightarrow{D}_n$ be an $n$-dicycle. Then $\overrightarrow{D}_n$ has a $1$-page oriented book embedding  if and only if the spine ordering is a natural ordering of $V(\overrightarrow{C})$ and every arc is tight, except the arc between the top vertex and bottom vertex in the spine.
\end{cor}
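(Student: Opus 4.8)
The plan is to deduce the corollary directly from Theorem~\ref{cyclic_on_C}, handling the two implications separately; the one point needing genuine care is the use of the direction rule, which is precisely what makes the statement true for \emph{directed} cycles rather than arbitrary cycles.

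\emph{Necessity.} Suppose $\overrightarrow{D}_n$ has a $1$-page oriented book embedding. Forgetting the arc directions yields a $1$-page book embedding of the underlying cycle $C_n$; since $C_n$ is not a path but is outerplanar, Theorem~\ref{BKthm} gives $bt(C_n)=1$, so this value of $k$ is minimal, and Theorem~\ref{cyclic_on_C} forces the spine order to be a natural ordering, say $\phi(1),\phi(2),\dots,\phi(n)$ read from the bottom of the spine to the top. Let $b$ be the arc of $\overrightarrow{D}_n$ joining the bottom vertex $\phi(1)$ and the top vertex $\phi(n)$; since $n\ge 3$ these two vertices are not consecutive on the spine, so $b$ is loose and hence lies in the single page. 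Because $\overrightarrow{D}_n$ is a directed cycle whose underlying vertices occur in cyclic order $\phi(1),\dots,\phi(n)$, tracing the cycle starting from $b$ shows that every arc other than $b$ joins some $\phi(i)$ and $\phi(i+1)$ and runs in the opposite direction along the spine to $b$. Consequently no arc other than $b$ can be loose: it would then share the single page with $b$ but carry the opposite orientation, violating the direction rule. Therefore every arc other than $b$ is tight, which is the claimed conclusion.

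\emph{Sufficiency.} Conversely, place the vertices of $\overrightarrow{D}_n$ on the spine in a natural ordering, route every arc tightly along the spine except the arc $b$ joining the top and bottom vertices, and embed $b$ in the single page. We check this is a valid $1$-page oriented book embedding. The planarity rule holds trivially since the page contains only $b$, and for the same reason the direction rule inside the page is vacuous. It remains to see that the $n-1$ tight arcs all agree in direction along the spine. These arcs are exactly the arcs of the dipath $\overrightarrow{D}_n\setminus b$, and since the spine order is a natural ordering, the vertices of this dipath occur consecutively on the spine in dipath order; hence the dipath starts either at the bottom of the spine, so that all its arcs point upward, or at the top, so that all its arcs point downward. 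In either case the tight arcs agree, the direction rule is satisfied, and the embedding is legitimate.

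The whole argument is a short bookkeeping reduction to Theorem~\ref{cyclic_on_C}; the step I expect to require the most care is the necessity direction, specifically the observation that for a directed cycle every non-$b$ arc runs against $b$ along the spine, so the direction rule rules out more than one loose arc. This is exactly where the oriented case departs from the unoriented one: an undirected cycle in a natural spine ordering may carry several loose edges, but in the directed cycle the direction rule collapses these to the single loose arc $b$.
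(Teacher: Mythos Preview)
Your proof is correct and follows the same approach the paper intends: the paper simply states that the corollary follows from Theorem~\ref{cyclic_on_C} without writing out any details, and your argument supplies exactly those details, using Theorem~\ref{cyclic_on_C} to force the natural ordering and then invoking the direction rule to pin down which arcs are tight. The one step you flagged as delicate---that in a directed cycle every non-$b$ arc runs along the spine in the direction opposite to $b$---is indeed the crux, and you handle it correctly.
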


We now show that every oriented cycle has a $1$-page oriented book embedding. 

\begin{thm}
Let $\overrightarrow{C}$ be an oriented cycle. Then $obt(\overrightarrow{C}) =1$.
\end{thm}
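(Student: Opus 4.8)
The plan is to prove $obt(\overrightarrow{C}) \ge 1$ and $obt(\overrightarrow{C}) \le 1$ separately, with essentially all the work in the upper bound. The lower bound is immediate: if $obt(\overrightarrow{C}) = 0$ then every arc is embedded into the spine $L$, so the underlying graph is a subgraph of a path; but $C$ is a cycle, so $obt(\overrightarrow{C}) \ge 1$. (Equivalently, $bt(C) \ge 1$ by Theorem~\ref{BKthm} and $bt(C) \le obt(\overrightarrow{C})$.) So it remains to exhibit a $1$-page oriented book embedding of an arbitrarily oriented cycle.

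For the upper bound I would fix a natural ordering $v_1, v_2, \dots, v_n$ of $V(\overrightarrow{C})$, so that the arcs of $\overrightarrow{C}$ are one arc between $v_i$ and $v_{i+1}$ for each $i \in [n-1]$ (the \emph{path arcs}) together with one arc between $v_1$ and $v_n$ (the \emph{closing arc}), and place $v_i$ on the spine at height $i$. Call an arc \emph{up} if its head lies above its tail, and \emph{down} otherwise. Since $n \ge 3$, the vertices $v_1$ and $v_n$ are not consecutive on the spine, so the closing arc is loose and must be routed through the single page; its direction forces that page to be an up-page or a down-page. I would then route onto the page every path arc whose direction agrees with the closing arc, and route onto the spine every remaining path arc.

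The verification splits into the two rules. \emph{Direction rule}: the arcs placed on the spine are all path arcs of one common direction, and the arcs placed on the page are all of the opposite common direction, which is the direction of the closing arc, so each of the spine and the page is monochromatic; if one insists the spine point upward, reflecting the whole embedding in the plane $z=0$ (which reverses the spine order, still a natural ordering) interchanges the two cases and makes the spine upward. \emph{Planarity rule}: each path arc joins two consecutive vertices of the spine and hence covers no vertex, so on the page the only arc covering anything is the closing arc, which covers all other vertices; thus any two arcs sharing the page are nested (a short arc inside the closing arc), disjoint, or share only an endpoint, and the planarity rule holds. On the spine the chosen path arcs are pairwise internally disjoint tight edges, which is consistent. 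Hence the construction is a valid $1$-page oriented book embedding, giving $obt(\overrightarrow{C}) \le 1$, and combined with the lower bound, $obt(\overrightarrow{C}) = 1$.

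The point to get right — rather than a genuine obstacle — is that, unlike the undirected case where an entire cycle fits on one page, an arbitrarily oriented cycle generally cannot have all of its arcs on a single page because of the direction rule; the remedy is to let the spine absorb exactly the path arcs of the ``wrong'' direction. This works precisely because, once $v_1,\dots,v_n$ are placed consecutively, all path arcs can be made tight simultaneously, and because the page then has to hold at most one long arc (the closing arc) while the remaining page arcs are short and create no crossings. The only other bit of care is the orientation of the closing arc, which is dealt with by the choice of up-/down-page (equivalently, a reflection).
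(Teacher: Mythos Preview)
Your proof is correct and follows essentially the same approach as the paper: place the vertices on the spine in a natural ordering, put the closing arc in the page, and then sort the remaining path arcs into the page or the spine according to whether their direction agrees with the closing arc. Your write-up is more careful about verifying the planarity and direction rules, but the construction is identical.
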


\begin{proof}
Let $C$ be the underlying graph of $\overrightarrow{C}$. Since $bt(C) \geq 1$, we have that $obt(\overrightarrow{C}) \geq 1$. To show that $obt(\overrightarrow{C}) \le 1$, embed the vertices into the spine so that the spine order is a natural ordering of $V(\overrightarrow{C})$, say $\phi(1), \phi(2),..., \phi(n)$. Embed the arc $a$ with endpoints $\phi(1)$,$\phi(n)$ into the page. For $1 < i \le n-1$, embed every arc having the same direction as $a$ into the page and embed each arc having the opposite direction of $a$ into the spine. Thus we obtain a 1-page oriented book embedding, and $obt(\overrightarrow{C}) \le 1$. Since we have shown that $1 \le obt(\overrightarrow{C}) \le 1$, we conclude that $obt(\overrightarrow{C}) = 1$.
\end{proof}

%To obtain a 1-page oriented book embedding of an oriented graph, we must also have a 1-page oriented book embedding of each of its oriented subgraphs. We now introduce classes of outerplanar oriented graphs whose members contain at least one directed cycle. We will show that for each of these oriented graphs, the oriented book thickness is more than one. In the next section, we will show that these oriented graphs are 2-page critical graphs, either by deletion or by switching. 

%For an oriented graph in $\mathcal{B}$, the direction of each arc is extremely important. We will show later in fact, that if the direction of any of the arcs is switched, the oriented book thickness will decrease. In the next three classes however, the deletion of an arc is required to decrease the oriented book thickness. Therefore, there is freedom in the direction of certain arcs while still requiring two pages.

\subsection{Oriented Trees}
We now discuss oriented book embeddings of oriented trees and oriented forests, and introduce a type of oriented tree, called a {\it fountain tree}, which will be utilized in Section 4. A {\it tree}, denoted $T$, is a connected graph having no cycle as a subgraph. An {\it oriented tree}, denoted $\overrightarrow{T}$, is an oriented graph whose underlying graph is a tree.
 
The authors in~\cite{SQLptI}, focusing on oriented graphs which contain no directed cycle, use a restricted definition of oriented book embeddings, requiring the direction of all arcs in every page to agree and such that no arc is embedded into the spine, i.e., each vertex \textit{must} be loose in the embedding. They prove that every oriented tree has a $1$-page oriented book embedding such that an arbitrarily chosen vertex, called the \textit{root} is uncovered. We translate their result below.

\begin{thm} \label{oriented tree} $($Heath, Pemmaraju, and Trenk, $1999$ ~\cite{SQLptI} $)$  For every oriented tree $\overrightarrow{T}$ with arbitrarily chosen root $v$, there exists a spine order of $V(\overrightarrow{T})$ that yields a $1$-page oriented book embedding of $\overrightarrow{T}$ in which the root $v$ is uncovered, every vertex is loose, and the page is upwards.   \end{thm}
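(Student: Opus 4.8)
The plan is to argue by induction on $|V(\overrightarrow{T})|$, rooting the tree at the prescribed vertex $v$ and assembling $1$-page embeddings of the subtrees that hang off $v$. The base case $|V(\overrightarrow{T})| = 1$ is immediate: a single vertex placed on the spine, with no arcs, is a $1$-page oriented book embedding in which that vertex is uncovered and loose and the (empty) page is vacuously upwards.

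For the inductive step, let $u_1,\dots,u_d$ be the neighbors of $v$ and, for each $i$, let $\overrightarrow{T_i}$ be the component of $\overrightarrow{T}-v$ containing $u_i$. Each $\overrightarrow{T_i}$ is an oriented tree with strictly fewer vertices than $\overrightarrow{T}$, so by the inductive hypothesis it has a $1$-page oriented book embedding $\varepsilon_i$ with root $u_i$ uncovered, every vertex loose, and the page upwards; let $I_i$ be the spine interval occupied by $\varepsilon_i$. Relabel so that $u_1,\dots,u_s$ are in-neighbors of $v$ (arcs $(u_i,v)$) and $u_{s+1},\dots,u_d$ are out-neighbors of $v$ (arcs $(v,u_i)$). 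Now build the combined spine order by placing, from bottom to top, the blocks $\varepsilon_1,\dots,\varepsilon_s$, then $v$, then the blocks $\varepsilon_{s+1},\dots,\varepsilon_d$; keep every arc of every $\varepsilon_i$ in the single page, and in addition route the arc joining $v$ and $u_i$ in that page for each $i$. Since $u_i$ lies below $v$ for $i\le s$ and above $v$ for $i>s$, every newly added arc is directed upwards, as is every inherited arc, so the direction rule holds and the page is upwards; and since no arc is placed on the spine, every vertex, $v$ included, is loose.

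What remains is the planarity rule and the uncoveredness of $v$. The root $v$ is uncovered because every inherited arc has both endpoints strictly on one side of $v$, while every newly added arc has $v$ as an endpoint. For planarity, arcs coming from two distinct blocks lie over disjoint spine intervals, hence do not cross; two newly added arcs share the endpoint $v$, hence do not cross; and a newly added arc $a$ joining $v$ to $u_i$ does not cross any arc of a block $\varepsilon_j$ with $j\ne i$, since such a block is either entirely spanned by $a$ (when $\varepsilon_j$ lies between $\varepsilon_i$ and $v$) or entirely disjoint from the span of $a$. The one case requiring the extra inductive hypothesis is an arc $xy$ of $\varepsilon_i$ against the arc $a$ joining $v$ and $u_i$: here $x,y\in I_i$ but $v\notin I_i$, so if $a$ and $xy$ shared an endpoint it could only be $u_i$ and then they would not cross, while if they did cross then exactly one of $x,y$ would lie strictly between $u_i$ and $v$; as $v$ sits on a single fixed side of $I_i$, this would force $x$ and $y$ to straddle $u_i$, i.e. $xy$ would cover $u_i$, contradicting that $u_i$ is uncovered in $\varepsilon_i$. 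Hence the assembled drawing is a valid $1$-page oriented book embedding of $\overrightarrow{T}$ with $v$ uncovered, every vertex loose, and the page upwards, which closes the induction.

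I expect the last planarity check to be the main obstacle: controlling how the attaching arcs at $v$ interact with the inherited arcs. The point that makes it work is precisely the ``$u_i$ uncovered'' clause supplied by induction, which is why the statement must carry that strengthening along. An induction that instead deletes a leaf $\ell$ and reinserts it next to its neighbor would force one to reason carefully about which side of the neighbor $\ell$ must go on to avoid a crossing; rooting at $v$ and splitting the neighbors by arc direction sidesteps that bookkeeping.
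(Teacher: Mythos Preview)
Your induction is correct and is essentially the standard proof of this result. Note, however, that the paper does not itself prove Theorem~\ref{oriented tree}: it is quoted from Heath, Pemmaraju, and Trenk~\cite{SQLptI}, and the paper's only contribution here is the OTSO Algorithm, presented without a correctness argument. That algorithm is the procedural counterpart of your induction---it processes a vertex by listing its out-neighbors on one side and its in-neighbors on the other, then recurses---so your argument and the paper's algorithm embody the same idea, with your write-up supplying the verification (in particular the planarity check via the ``$u_i$ uncovered'' hypothesis) that the paper omits.
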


%We refer to an embedding of $\overrightarrow{T}$ described in Theorem~\ref{oriented tree} as $f(\overrightarrow{T},v)$, with root $v$. We may assume that the direction of the page is upwards.

%Notice that the spine order itself is not given by Theorem~\ref{oriented tree}, we only know that such a spine order exists, and we may arbitrarily choose a vertex, the root, to be uncovered. Also, for a $1$-page oriented book embedding of an oriented tree, if the direction of the page is downwards, we may reverse the spine order to obtain a $1$-page oriented book embedding such that each vertex is loose, the direction of the page is upwards, and the root $v$ is still uncovered. 

The above result only proves existence. We give an algorithm, called the \underline{O}riented \underline{T}ree \underline{S}pine \underline{O}rder Algorithm, or OTSO Algorithm, that gives a spine order which yields a $1$-page oriented book embedding as described in Theorem~\ref{oriented tree}. For convenience, we will always assume the direction of the page to be upwards, unless otherwise noted. For an oriented tree $\overrightarrow{T}$, we can choose the root to be a sink in $\overrightarrow{T}$. The following lemma proves that, in a $1$-page oriented book embedding of $\overrightarrow{T}$, we can place the root so that it is the top vertex in the spine.

\begin{algorithm}
\small
\caption{OTSO Algorithm}
\label{alg1}

\begin{algorithmic}[1]
  \REQUIRE An oriented tree $\overrightarrow{T}$, an arbitrarily fixed vertex $x \in V(\overrightarrow{T})$, and two empty lists $L$ and $S$
  \ENSURE A list $S$ that yields the spine order of a $1$-page oriented book embedding of $\overrightarrow{T}$ such that each vertex is loose in the embedding, the direction of all arcs agree, and $x$ is uncovered
\begin{multicols}{2}
  \WHILE{$L \neq \emptyset$}  
  
  \IF{$\vert N^+(x) \vert = n > 0, n \in \mathbb{N}$}
  \STATE Let $N^+(x) = \{u_1, u_2,...,u_n \}$
  \STATE Add $u_1$ to the end of $L$
  \STATE Add $u_1$ to the beginning of $S$

  		\IF{$n \geq 2$}
		
   			\FOR {$i = 2, i \leq n, i++$}
   			\IF {$u_i \notin L$}
        	\STATE add $u_i$ to $L$ between $u_{i-1}$ and $x$	 		        
        	\STATE add $u_i$ to $S$ between $u_{i-1}$ and $x$
        	\ENDIF
   			\ENDFOR
   			
  		\ENDIF

    \ENDIF  
 
  \IF{$\vert N^-(x) \vert = m > 0, m \in \mathbb{N}$}
  \STATE Let $N^-(x) = \{v_1, v_2,...,v_n \}$
  \STATE Add $u_v$ to the end of $L$
  \STATE Add $v_1$ to the end of $S$

  		\IF{$m \geq 2$}
		
   			\FOR {$i = 2, i \leq m, i++$}
   			\IF {$v_i \notin L$}
        	\STATE add $v_i$ to $L$ between $v_{i-1}$ and $x$  	 		        \STATE add $v_i$ to $S$ between $v_{i-1}$ and $x$
        	\ENDIF
   			\ENDFOR
   			
  		\ENDIF

    \ENDIF 
 \ENDWHILE 
 \end{multicols} 
\end{algorithmic}

\end{algorithm}

\begin{lemma} \label{tree_sink}  If a vertex $x$ is a sink in $\overrightarrow{T}$ then there exists a spine order of $V(\overrightarrow{T})$ which yields a $1$-page oriented book embedding of $\overrightarrow{T}$ such that $x$ is the top vertex in the spine and each vertex is loose in the oriented book embedding.\end{lemma}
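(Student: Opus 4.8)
The plan is to argue by induction on $|V(\overrightarrow{T})|$, exhibiting the spine order explicitly (this is essentially what the OTSO Algorithm does with $x$ as the fixed vertex, so I would present the induction as the correctness proof of the algorithm applied to a sink). The base case $|V(\overrightarrow{T})| = 1$ is trivial, and when $|V(\overrightarrow{T})| = 2$ the single arc points into $x$, so placing $x$ on top and its in-neighbor below gives one tight-free, upward arc.

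For the inductive step, let $x$ be a sink in $\overrightarrow{T}$ and let $N^-(x) = \{v_1, \dots, v_m\}$ (all neighbors of $x$ are in-neighbors since $x$ is a sink). Deleting $x$ splits $\overrightarrow{T}$ into oriented subtrees $\overrightarrow{T_1}, \dots, \overrightarrow{T_m}$, where $\overrightarrow{T_j}$ contains $v_j$. By Theorem~\ref{oriented tree} applied to each $\overrightarrow{T_j}$ with root $v_j$, there is a $1$-page oriented book embedding of $\overrightarrow{T_j}$ in which $v_j$ is uncovered, every vertex is loose, and the page is upward. I would then concatenate these spine orders — placing the block for $\overrightarrow{T_1}$ at the bottom, then $\overrightarrow{T_2}$ above it, and so on — and finally place $x$ at the very top of the spine. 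Within each block the embedding is unchanged (so all arcs of each $\overrightarrow{T_j}$ remain loose, upward, and planar, and nothing from block $i$ covers a vertex of block $j$ for $i \neq j$). The new arcs $(v_j, x)$ are each drawn in the upward page; since $v_j$ is uncovered in its own block, the arc $(v_j,x)$ can be routed so that it only covers vertices lying strictly above $v_j$'s block, and two such arcs $(v_i,x)$, $(v_j,x)$ with $i<j$ are nested (sharing the endpoint $x$) rather than crossing. Every such arc is upward, consistent with the page direction, so the direction rule holds; $x$ being topmost is covered by none of them except as their common head, which is allowed. This produces the required $1$-page embedding with $x$ on top and every vertex loose.

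The main obstacle is verifying the planarity rule for the new arcs $(v_j, x)$ against each other and against the pre-existing arcs inside the blocks: I need that each $v_j$ is uncovered \emph{within its block} (given by Theorem~\ref{oriented tree}) and that the blocks are stacked in an order consistent with how the $v_j$'s connect to $x$, so that the arcs $(v_j,x)$ form a nested family anchored at $x$. The fact that all of Theorem~\ref{oriented tree}'s embeddings use an upward page is exactly what lets me merge them into a single upward page without violating the direction rule — this is the point where choosing $x$ to be a \emph{sink} (rather than an arbitrary vertex) is essential, since then every new arc is oriented toward $x$, i.e. upward.
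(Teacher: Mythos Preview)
Your proposal is correct and follows essentially the same approach as the paper: delete the sink $x$, apply Theorem~\ref{oriented tree} to each resulting subtree with root $v_j$ (so that $v_j$ is uncovered, all vertices are loose, and the page is upward), stack the blocks below $x$, and observe that the new arcs $(v_j,x)$ are pairwise nested at $x$ and do not cross the old arcs because each $v_j$ is uncovered in its block. The only cosmetic difference is that you frame this as an induction on $|V(\overrightarrow{T})|$, but since you invoke Theorem~\ref{oriented tree} directly on the subtrees rather than the induction hypothesis, the induction scaffolding is superfluous and can simply be dropped.
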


\begin{proof} Since $x$ is a sink, let $N(x)=N^-(x)= \{v_1,v_2,...,v_k \}$. Delete $x$ to obtain $k$ disconnected oriented trees, denoted $\overrightarrow{T_{v_i}}$, with root $v_i$, $1 \le i \le k$. Let $\beta_i$ be the spine order of $\overrightarrow{T_{v_i}}$ in the $1$-page oriented book embedding guaranteed by Theorem~\ref{oriented tree}. To obtain a $1$-page oriented book embedding of $\overrightarrow{T}$, embed the vertices into the spine with spine order $(x, \beta_1, \beta_2,...,\beta_k)$ such that $x$ is the top vertex in the spine. By Theorem~\ref{oriented tree}, every arc of $\overrightarrow{T_{v_i}}$, $1 \le i \le k$, can be placed into the interior of the page such that $v_i$ is uncovered, each arc is upwards, and each vertex is loose. Then place each arc $(v_i,x)$ into the interior of the page, and since all arcs $(v_i,x)$ share $x$ as a common endpoint, they are nested and thus do not cross in the page. Therefore the statement holds. \end{proof}

Let $\overrightarrow{T}$ be an oriented tree with sink $x$. For convenience, we call an oriented book embedding, as described in Lemma~\ref{tree_sink}, a \textit{sink oriented book embedding}; we abbreviate the spine order of such an embedding $(x;\alpha_x)$, such that $x$ is the top vertex in the spine, and $\alpha_x$ represents $( \beta_1, \beta_2,...,\beta_k)$. Similarly, we can construct a \textit{source oriented book embedding}.

We are now prepared to define a sink fountain tree having an specific $1$-page oriented book embedding, which will be useful in Section 4. To do this, we use the standard dipath $\overrightarrow{P_n}$, with arcs $(i,i+1)$, $1 \le i \le n-1$, and $n$ oriented trees $\overrightarrow{T_{x_i}}$, $1 \le i \le n$, such that each $\overrightarrow{T_{x_i}}$ contains a sink $x_i$. By Lemma~\ref{tree_sink}, each $\overrightarrow{T_{x_i}}$ has a $1$-page sink oriented book embedding, such that $x_i$ is the top vertex in the spine, with spine order $(x_i, \alpha_i)$. Embed $V(\overrightarrow{P_n})$ and $V(\overrightarrow{T_{x_i}})$, $1 \le i \le n$, into the spine of a book with spine order $(1,2,...,n,(x_n; \alpha_n), (x_{n-1}; \alpha_{n-1}),...,(x_1; \alpha_1))$ such that $1$ is the top vertex in the spine. Place all arcs in $A(\overrightarrow{P_n})$ into the spine, and place all arcs in $A(\overrightarrow{T_{x_i}})$, $1 \le i \le n$, into the interior of the page. Therefore we have a $1$-page oriented book embedding $\overrightarrow{P_n} \cup \bigcup_{i=1}^n \overrightarrow{T_{x_i}}$. We now identify each $x_i$ in $\overrightarrow{T_{x_i}}$ with $i$ in $\overrightarrow{P_n}$, changing the spine order to $(x_1=1,x_2=2,...,x_n=n, \alpha_n, \alpha_{n-1},..., \alpha_1)$, as shown in Figure~\ref{fig: fnt_tree}
for $n=2$. This does not increase the number of pages required, since for $1 \le i \le n$, each set of arcs with head $x_i$ will be nested inside the set of arcs with head $x_{i-1}$. This yields an oriented tree containing an $n$-dipath. We call such an oriented tree a \textit{sink fountain tree} and call the $1$-page oriented book embedding described above a \textit{sink fountain oriented book embedding}. We denote the spine order of a such an embedding $(x_1; \gamma_{x_1})^f$, where $x_1$ is the top vertex in the spine and $\gamma_{x_1}$ represents $(x_2,...,x_n, \alpha_n, \alpha_{n-1},..., \alpha_1)$. Similarly, we can construct a \textit{source fountain tree}.

\begin{figure}[hbtp]
\centering
\includegraphics[scale=.4]{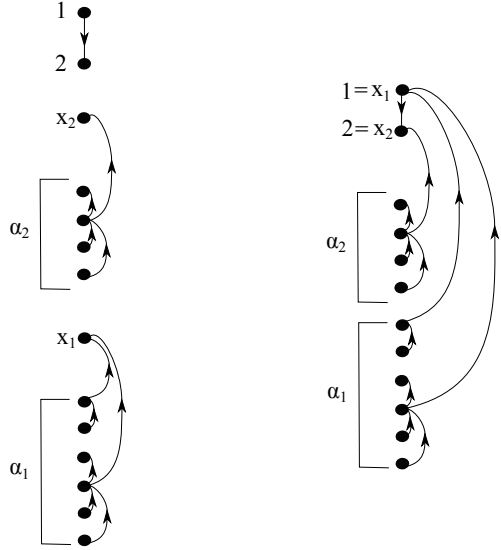}
\caption{Constructing a sink fountain tree}
\label{fig: fnt_tree}
\end{figure}

\section{$1$-page Critical Oriented Graphs}
We now characterize the class, $\mathcal{M}^1$, of $1$-page critical oriented graphs. The only connected oriented graph on $n$ vertices which can be embedded into a $0$-page book, i.e., only the spine, is a directed path $\overrightarrow{P_n}$. Since $\overrightarrow{P_n}$ has exactly one source and exactly one sink, to find minimal obstructions, we consider oriented graphs having a source or sink of degree at least two. Thus we define $S^+$ to be the oriented $3$-path containing a source of degree two, and we define its converse $(S^+)^*$ to be $S^-$, which contains a sink of degree two. If an oriented graph is an oriented tree that is not an oriented path, it contains a vertex of degree greater than or equal to three and must also contain an oriented subgraph isomorphic to either $S^+$ or $S^-$; therefore, the only other oriented graphs to consider are oriented cycles. If an oriented cycle is not a directed cycle, it must contain an oriented subgraph that is isomorphic to either $S^+$ or $S^-$. While an $n$-dicycle, denoted $\overrightarrow{D}_n$, contains no $S^+$ or $S^-$, its oriented book thickness is one, which will be discussed more in Section 2. However, the deletion of any arc in $\overrightarrow{D}_n$ results in a dipath; therefore, $\overrightarrow{D}_n$, $n \geq 3$, is in $\mathcal{M}^1$, giving the following result.

\begin{lemma} \label{M_1} $\mathcal{M}^1 = \{ S^+, S^-, \overrightarrow{D}_n \vert n \geq 3\}$. \end{lemma}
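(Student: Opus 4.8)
## Proof Proposal

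The plan is to establish the equality $\mathcal{M}^1 = \{S^+, S^-, \overrightarrow{D}_n \mid n \geq 3\}$ by proving two containments. The discussion preceding the lemma already assembles most of the needed ingredients, so my job is to organize them into a clean argument.

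\textbf{Showing the right-hand side is contained in $\mathcal{M}^1$.} I must verify that each listed oriented graph $\overrightarrow{D}$ satisfies both $obt(\overrightarrow{D}) = 1$ and $obt(\overrightarrow{D} \setminus a) = 0$ for every arc $a$. For $\overrightarrow{D}_n$ with $n \geq 3$: its underlying graph is $C_n$, so $bt(C_n) = 1$ by Theorem~\ref{BKthm}, hence $obt(\overrightarrow{D}_n) \geq 1$; and $obt(\overrightarrow{D}_n) = 1$ was shown in Section 2. Deleting any arc of $\overrightarrow{D}_n$ leaves a directed path $\overrightarrow{P_n}$, which embeds in a $0$-book, so $obt(\overrightarrow{D}_n \setminus a) = 0$. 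For $S^+$ (and symmetrically $S^-$): its underlying graph is $P_3$ plus... actually $S^+$ is the oriented $3$-path with a source of degree two, so its underlying graph is the star $K_{1,2} = P_3$; wait — a $3$-path has only $2$ edges and a degree-two center, and the underlying graph of $S^+$ is this path, which \emph{is} outerplanar but not a path once we note the orientation forces the center to be a source. The key point: $S^+$ cannot be embedded in a $0$-book because the direction rule forbids putting both arcs on the spine with agreeing orientation when they share the source as common tail — placing both arcs on the spine (in a single line) with the source in the middle forces one arc up and one arc down, violating that all spine arcs agree. So $obt(S^+) \geq 1$, and a $1$-page embedding clearly exists (put one arc on the spine, one on the page). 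Deleting either arc of $S^+$ yields $\overrightarrow{P_2}$ plus an isolated vertex (or just $\overrightarrow{P_2}$), which embeds in a $0$-book. Hence $S^+, S^- \in \mathcal{M}^1$.

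\textbf{Showing $\mathcal{M}^1$ is contained in the right-hand side.} Let $\overrightarrow{D} \in \mathcal{M}^1$. First, $\overrightarrow{D}$ is connected (the argument in the unoriented-critical case transfers verbatim: a disconnected critical graph would have a component whose arc-deletion does not lower $obt$). Next, $\overrightarrow{D}$ cannot itself be a directed path, since $obt(\overrightarrow{P_n}) = 0$. If the underlying graph $D$ contains a cycle, then since $\overrightarrow{D}$ is minimal and connected, $\overrightarrow{D}$ must equal a single oriented cycle $\overrightarrow{C}$ (any extra arc or vertex could be deleted, and by $obt(\overrightarrow{C}) = 1$ from Section 2 together with criticality, nothing extra survives — I should spell this out: if $\overrightarrow{D}$ strictly contains an oriented cycle $\overrightarrow{C}$ as a proper oriented subgraph, pick an arc $a \notin A(\overrightarrow{C})$; then $\overrightarrow{D}\setminus a \supseteq \overrightarrow{C}$ so $obt(\overrightarrow{D}\setminus a) \geq 1$, contradicting criticality). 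So $\overrightarrow{D}$ is an oriented cycle; if it is not a directed cycle it contains an $S^+$ or $S^-$ as a proper oriented subgraph, again contradicting criticality; hence $\overrightarrow{D} = \overrightarrow{D}_n$ for some $n \geq 3$. If instead $D$ is acyclic, then $\overrightarrow{D}$ is an oriented tree that is not an oriented path, so it has a vertex of degree $\geq 3$, which in any orientation produces a local configuration containing an oriented subgraph isomorphic to $S^+$ or $S^-$ (a degree-$\geq 3$ vertex has either two out-neighbors or two in-neighbors); this subgraph is proper unless $\overrightarrow{D}$ \emph{is} $S^+$ or $S^-$, and criticality forces the latter.

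\textbf{Main obstacle.} The routine direction (RHS $\subseteq \mathcal{M}^1$) is genuinely routine. The subtlety lies in the $\mathcal{M}^1 \subseteq$ RHS direction, specifically in being careful that "contains $S^+$ or $S^-$ as an oriented subgraph" plus criticality really does force equality — I need the observation that if $\overrightarrow{D}$ contains $\overrightarrow{H}$ as a proper oriented subgraph with $obt(\overrightarrow{H}) \geq 1$, then deleting any arc outside $A(\overrightarrow{H})$ keeps $obt \geq 1$; and if $A(\overrightarrow{H}) = A(\overrightarrow{D})$ but $V(\overrightarrow{H}) \subsetneq V(\overrightarrow{D})$, the extra (necessarily isolated, by connectivity this can't happen) vertices force a contradiction with connectivity. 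So the cleanest phrasing is: $\overrightarrow{D}$ critical and containing a copy of $S^+$, $S^-$, or $\overrightarrow{D}_n$ forces $\overrightarrow{D}$ to \emph{equal} that copy. The one place to be vigilant is confirming that $obt(S^+) = 1$ rather than $0$, i.e., that the direction rule really obstructs the $0$-page embedding of a source/sink of degree two — this is the conceptual heart distinguishing the oriented theory from Bernhart–Kainen, and I will state it explicitly.
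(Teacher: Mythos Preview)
Your approach matches the paper's: the paper's ``proof'' is exactly the paragraph preceding the lemma, and it proceeds by the same two-step logic (verify each listed graph is $1$-page critical; then argue any connected $\overrightarrow{D}$ with $obt(\overrightarrow{D})\ge 1$ either contains a proper copy of $S^+$, $S^-$, or a dicycle, or equals one of them). Your write-up is in fact more careful than the paper's in spelling out why criticality forces equality with the contained obstruction.

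One small slip to fix in the acyclic case: you write ``$\overrightarrow{D}$ is an oriented tree that is not an oriented path, so it has a vertex of degree $\ge 3$,'' but you have only established that $\overrightarrow{D}$ is not a \emph{directed} path. The case where the underlying graph \emph{is} a path (so $\overrightarrow{D}$ is an oriented path that is not directed) must be handled separately---there every internal direction-reversal gives a copy of $S^+$ or $S^-$, and your proper-subgraph-forces-equality principle then finishes it. This is exactly the case that produces $S^+$ and $S^-$ themselves, so it cannot be skipped.
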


\section{Strictly Uni-dicyclic Critical Graphs}
In this section, we give the complete description for the class of \textit{strictly uni-dicyclic graphs}, denoted $\mathcal{U}$, and discuss three subclasses, $\mathcal{I},\mathcal{T},\mathcal{R}$, of $\mathcal{U}$ that are $2$-page critical, which we use to characterize $\mathcal{M}^2 \cap \mathcal{U}$ in Theorem~\ref{unicyclic_list}.  

Let $\overrightarrow{D}_n$, $n \geq 3$, be an $n$-dicycle, with arcs $(\phi(i),\phi(i+1))$, for $1 \le i \le n-1$, and arc $(\phi(n),\phi(1))$; then $\overrightarrow{D}_n$ is a member of $\mathcal{U}$ and we can construct every oriented graph in $\mathcal{U}$ as follows. Following the definition of $1$-sum for undirected graphs, found in~\cite{sum}, we define the {\it $1$-sum} of two oriented graphs $\overrightarrow{D}$, $\overrightarrow{D}'$, via $y$ and $y'$, to be the oriented graph obtained by identifying a vertex $y \in V(\overrightarrow{D})$ with a vertex $y' \in V(\overrightarrow{D}')$; we denote the resulting oriented graph $\overrightarrow{D}(y) +_1 \overrightarrow{D'}(y')$. For convenience, we call a single vertex a \textit{trivial oriented tree}. For $n$ distinct, possibly trivial, oriented trees $\overrightarrow{T_i}$, $1 \le i \le n$, we describe a member of $\mathcal{U}$ to be $\bigcup_{i=1}^n (\overrightarrow{D}_n(\phi(i)) +_1 \overrightarrow{T_i}(y_i))$, where $\phi(i) \in V(\overrightarrow{D}_n)$ and $y_i \in V(\overrightarrow{T_i})$. For an oriented graph $\overrightarrow{D}$ in $\mathcal{U}$, if $\overrightarrow{T_i}$ is not a trivial oriented tree, we call the vertex $\phi(i)=y_i$ in $\overrightarrow{D}$ a \textit{heavy vertex}. In fact, each member of $\mathcal{I}$, $\mathcal{T}$, and $\mathcal{R}$ have most three heavy vertices. The next result shows that if a strictly uni-dicyclic graph has exactly one heavy vertex, then its oriented book thickness is one. 

%Suppose an oriented graph $\overrightarrow{G}$ contains a directed cycle $\overrightarrow{D}_n$ as an oriented subgraph. If we wish to obtain a $1$-page oriented book embedding of $\overrightarrow{G}$, then we must embed $\overrightarrow{D}_n$ as stated in Corollary~\ref{directed cycle}, since $obt(\overrightarrow{G}) \geq obt(\overrightarrow{D}_n)$.

%Let $\overrightarrow{D}_n$ be an $n$-dicycle whose vertex set $V(\overrightarrow{D}_n)$ can be expressed $\phi(1)$, $\phi(2),..., \phi(n)$ for some isomorphism $\phi$ from the standard cycle $[n]$.

% Thus, for an oriented tree $\overrightarrow{T}$ with such an embedding, we can place new vertices in between any two consecutive vertices of $V(\overrightarrow{T})$ in the spine. In fact, we could identify a vertex $x$ of $\overrightarrow{T}$ with a vertex of $\overrightarrow{D}_n$ and we could embed the remaining vertices of $\overrightarrow{D}_n$ below $x$ in the spine in a natural order and so that $x$ is an endpoint of the loose arc in in $\overrightarrow{D}_n$. With this logic, we have the following result, which will be used in the proof of Theorem~\ref{unicyclic_list}. 

 \begin{lemma} \label{dicycle_1_tree}
 Let $\overrightarrow{D} \in \mathcal{U}$. If $\overrightarrow{D}$ has exactly one heavy vertex, then there exists a $1$-page oriented book embedding of $\overrightarrow{D}$ such that the page is upwards and the endpoints of the loose arc of $\overrightarrow{D_n}$ are half-loose in the embedding. 
 \end{lemma}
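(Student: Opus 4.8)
The plan is to assemble the embedding from two pieces: a contiguous spine block that realizes the dicycle $\overrightarrow{D}_n$ with its loose arc pointing upward, and a standard $1$-page embedding of the attached tree, grafted in at the heavy vertex. Let $\phi(j)$ be the unique heavy vertex, let $\overrightarrow{T}$ be the oriented tree attached at it, and write $y:=\phi(j)$ for the identified vertex; after cyclically relabeling $V(\overrightarrow{D}_n)$ we may assume $j=1$. We will use $a:=(\phi(n),\phi(1))$ as the loose arc, so that the remaining cycle arcs $(\phi(i),\phi(i+1))$, $1\le i\le n-1$, form the dipath $\phi(1)\to\phi(2)\to\cdots\to\phi(n)$.

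First I would lay down the cycle. Embed $\phi(n),\phi(n-1),\dots,\phi(2),\phi(1)$ on the spine, bottom to top, as a contiguous block; put every arc $(\phi(i),\phi(i+1))$ on the spine; and put $a$ into the page. Each spine arc joins two consecutive block vertices, so it is tight and covers no vertex, and all of them point downward, so the direction rule is satisfied on the spine; and since $\phi(n)$ is lowest and $\phi(1)$ highest in the block, $a$ points upward --- this is exactly what makes the page upward rather than downward. At this stage $\phi(n)$ is incident only to the tight arc $(\phi(n-1),\phi(n))$ and to $a$, and $\phi(1)$ is incident only to the tight arc $(\phi(1),\phi(2))$ and to $a$.

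Next I would graft on the tree. By Theorem~\ref{oriented tree}, fix a $1$-page oriented book embedding of $\overrightarrow{T}$ rooted at $y$ in which every vertex is loose, every arc lies in the (upward) page, and $y$ is uncovered. Since no arc of $\overrightarrow{T}$ covers $y$, every arc of $\overrightarrow{T}$ is either incident to $y$ or lies strictly on one side of $y$ in the spine order; hence splicing the entire cycle block into that spine order immediately below $y$ introduces no crossing among arcs of $\overrightarrow{T}$. The only arcs that then run over a block vertex are the arcs of $\overrightarrow{T}$ from $y$ to its in-neighbors; each covers the whole block and shares the endpoint $y$ with $a$, so it is nested with $a$, these arcs are mutually nested, and each is still upward, so no crossing or direction violation arises. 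The tight cycle arcs remain the only arcs on the spine, so the single page is upward; and the endpoints of $a$ are half-loose, since $\phi(n)$ is not heavy and so meets only the one tight arc $(\phi(n-1),\phi(n))$, while the arcs of $\overrightarrow{T}$ incident to $\phi(1)=y$ are all loose, so $\phi(1)$ meets only the one tight arc $(\phi(1),\phi(2))$.

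The main obstacle is the orientation bookkeeping. A directed cycle placed contiguously on the spine necessarily has its wrap-around arc pointing against all the others, so the block must be written in ``reverse'' order to make the loose arc upward; and the tree must be attached at an endpoint of that loose arc, not at an interior block vertex, so that the long arcs running from the heavy vertex into $\overrightarrow{T}$ nest with the loose arc instead of crossing it. Reconciling these two requirements, and invoking the ``root uncovered'' clause of Theorem~\ref{oriented tree} to justify the splice, is the crux; the remaining checks --- planarity, the direction rules on the spine and in the page, and the half-loose condition --- are routine.
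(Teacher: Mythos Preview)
Your proof is correct and follows essentially the same approach as the paper: embed the tree via Theorem~\ref{oriented tree} with the heavy vertex $y=\phi(1)$ as the uncovered root, then insert the dipath $\phi(2),\dots,\phi(n)$ on the spine immediately below $y$ with the wrap-around arc $(\phi(n),\phi(1))$ in the upward page. The paper's version is terser and justifies the splice only by noting that $y$ is loose, whereas you correctly make explicit that the ``root uncovered'' clause is what actually prevents crossings; your additional checks on nesting, direction, and the half-loose condition are all sound.
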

 
\begin{proof}
Let $\overrightarrow{D}_n$ be an $n$-dicycle with arcs $(\phi(i), \phi(i+1))$, for $1 \le i \le n-1$, and arc $(\phi(n),\phi(1))$. We may assume the heavy vertex in $\overrightarrow{D}$ is $\phi(1)$. Then there exists a non-trivial oriented tree $\overrightarrow{T}$, containing a vertex $y$, such that $\overrightarrow{D}= \overrightarrow{D_n}(\phi(1)) +_1 \overrightarrow{T}(y)$. Applying Theorem~\ref{oriented tree}, embed $\overrightarrow{T}$ into a $1$-page book such that each vertex is loose and the direction of the page is upwards. Since $y= \phi(1)$ is loose in the spine, we can insert the directed path $\phi(1), \phi(2),...,\phi(n)$ into the spine below $y$, so that each arc of the dipath is downwards in the spine, and place the arc $(\phi(n),\phi(1))$ into the interior of the page. Thus we have a $1$-page oriented book embedding of $\overrightarrow{D}$ with $\phi(1)$ and $\phi(n)$ half-loose in the embedding. 
\end{proof}

We now define the first subclass of strictly uni-dicyclic graphs, $\mathcal{I}$, as follows. An oriented graph $\overrightarrow{D}$ is a member of $\mathcal{I}$ if $\overrightarrow{D}$ contains an $n$-dicycle, $\overrightarrow{D_n}$, with $n \geq 4$, and for two vertices $\phi(i), \phi(j) \in V(\overrightarrow{D_n})$ with $\vert i - j \vert > 1$, there exist two arcs $a_i,a_j \in A(\overrightarrow{D})$, having one endpoint of degree one and other endpoint $\phi(i), \phi(j)$, respectively, in $\overrightarrow{D_n}$, such that $\overrightarrow{D} = \overrightarrow{D_n} \cup \{a_i,a_j \}$. Three members of $\mathcal{I}$, with $n =4$, are shown in Figure~\ref{fig:I}.

\begin{lemma}
\label{I>1}
If $\overrightarrow{D} \in \mathcal{I}$, then $obt(\overrightarrow{D}) > 1$.
\end{lemma}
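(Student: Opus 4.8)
The plan is to derive a contradiction from the assumption that some $\overrightarrow{D} \in \mathcal{I}$ admits a $1$-page oriented book embedding. So suppose $\overrightarrow{D} = \overrightarrow{D_n} \cup \{a_i, a_j\}$ with $n \geq 4$, where $a_i, a_j$ are pendant arcs at $\phi(i), \phi(j)$ respectively and $|i-j| > 1$, and suppose $\overrightarrow{D}$ has a $1$-page oriented book embedding. First I would restrict attention to the sub-embedding of $\overrightarrow{D_n}$: since $\overrightarrow{D_n}$ is an $n$-dicycle, Corollary~\ref{directed cycle} forces the spine order of $V(\overrightarrow{D_n})$ to be a natural ordering, with every arc of the dicycle tight except the single loose arc running between the top and bottom vertices of this ordering. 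In particular, in the spine order of the whole embedding, the vertices of $\overrightarrow{D_n}$ occur in natural cyclic order, and exactly two of them — call them $\phi(p)$ (top) and $\phi(q)$ (bottom), where $\phi(q)\phi(p)$ is the loose arc — are not ``squeezed'' between two tight dicycle arcs; every other $\phi(\ell)$ is a tight vertex with respect to $\overrightarrow{D_n}$.

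The key combinatorial point is then: a pendant arc $a$ at a tight vertex $\phi(\ell)$ of $\overrightarrow{D_n}$ cannot be placed anywhere. Its non-dicycle endpoint $w$ must sit somewhere in the spine; the arc $\phi(\ell) w$ must go into the page (it can't be tight, since $\phi(\ell)$ already has two tight dicycle arcs at it and the spine is a single line). But $\phi(\ell)$ is flanked in the spine by its two dicycle-neighbors $\phi(\ell-1), \phi(\ell+1)$, which are joined to $\phi(\ell)$ by tight arcs; so any page arc at $\phi(\ell)$ that reaches a vertex $w$ beyond $\phi(\ell\pm 1)$ would have to ``jump over'' one of $\phi(\ell\pm1)$, and one checks via the planarity rule that this arc then crosses the loose dicycle arc $\phi(q)\phi(p)$ (which also lies in the page and spans from top to bottom). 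More carefully: the loose dicycle arc covers every vertex of $V(\overrightarrow{D_n})$ strictly between $\phi(p)$ and $\phi(q)$ in the spine; a page arc from an interior tight vertex $\phi(\ell)$ to any vertex $w$ not equal to one of its two spine-neighbors must, by the planarity (non-crossing) rule, be nested inside the loose arc, which forces $w$ to lie strictly between $\phi(p)$ and $\phi(q)$ as well, and moreover on the correct side — and then it must also be nested with respect to the tight arcs at $\phi(\ell)$, which is impossible because those tight arcs are on the spine, not in the page, and $w \neq \phi(\ell\pm1)$. Hence a pendant arc can only be attached (in a $1$-page embedding) at $\phi(p)$ or at $\phi(q)$, the two endpoints of the loose arc. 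Since $\overrightarrow{D}$ has pendant arcs at two vertices $\phi(i), \phi(j)$ with $|i-j| > 1$, and $\{\phi(p), \phi(q)\}$ corresponds to a consecutive pair $\phi(q), \phi(q+1)$ in the natural ordering (the endpoints of the unique loose arc are consecutive on the cycle), we would need $\{i,j\} = \{p,q\}$ with $|p - q| = 1$ in cyclic terms, contradicting $|i-j| > 1$ together with $n \geq 4$ (so that the ``distance'' is genuinely not $1$).

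Before writing this up I would also need to handle the direction rule, not just the planarity rule: the page is, say, upwards, so $a_i$ and $a_j$ and the loose dicycle arc all have to agree in orientation with whatever else is in that page — but since we are in the $1$-page case there is only the one page, so every loose arc is in it and must be consistently oriented; this only strengthens the obstruction and I'd mention it but not belabor it. I also want to double-check the degenerate possibility that one or both pendant endpoints coincide in the spine with $\phi(p)$ or $\phi(q)$ in a way I haven't accounted for, and the possibility $n=4$ where ``$|i-j|>1$'' forces $\{i,j\}$ to be an antipodal pair — that case is actually the cleanest, since antipodal vertices of a $4$-dicycle are as far as possible from being the two endpoints of the loose arc.

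The main obstacle I expect is making the ``a page arc at an interior tight vertex is forced to cross the loose arc'' step fully rigorous: one has to carefully track, using the planarity rule as stated in the excerpt (the four allowed interleaving patterns $i<j<l<m$, etc.) and the fact that the loose dicycle arc is the one spanning from the top vertex to the bottom vertex, exactly which positions in the spine a new page-neighbor $w$ of $\phi(\ell)$ could occupy without crossing either the loose arc or the two tight arcs incident to $\phi(\ell)$ — and then show this set of positions is empty unless $\phi(\ell) \in \{\phi(p), \phi(q)\}$. A clean way to organize this is probably a short sub-lemma: \emph{in any $1$-page oriented book embedding of $\overrightarrow{D_n} \cup \{a\}$ with $a$ a pendant arc at $\phi(\ell)$, the vertex $\phi(\ell)$ must be an endpoint of the loose arc of $\overrightarrow{D_n}$} — and then Lemma~\ref{I>1} is immediate by applying it to both $a_i$ and $a_j$.
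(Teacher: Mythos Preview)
Your proposal is correct and follows essentially the same line as the paper's proof: assume a $1$-page oriented book embedding, invoke Corollary~\ref{directed cycle} to force the dicycle vertices into a consecutive block with one loose arc spanning top to bottom, observe that at least one of $\phi(i),\phi(j)$ (being non-adjacent on the cycle) must lie strictly inside that block, and conclude that its pendant arc must cross the loose arc. Your write-up is more discursive than necessary---in particular the direction-rule discussion and the detour through nesting with tight arcs are not needed, since the fact that all other dicycle arcs are tight already forces the pendant endpoint to lie entirely above or below the dicycle block, whence the crossing is immediate---but the sub-lemma you propose at the end is exactly the clean distillation of the argument the paper uses.
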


\begin{proof} Assume there exists a $1$-page oriented book embedding of $\overrightarrow{D}$. Since $\overrightarrow{D}$ contains dicycle, by Lemma~\ref{directed cycle}, each vertex of $\overrightarrow{D}_n$ is tight in the embedding, except the endpoints of the loose arc. However $\phi(i)$ and $\phi(j)$ are not adjacent, thus at least one of $\{\phi(i),\phi(j)\}$, say $\phi(i)$, is covered by the loose arc of $\overrightarrow{D_n}$; let $u_i$ be the other endpoint of the $a_i$. Since $u_i$ must be located in the spine above all vertices of the dicycle, or below all vertices of the dicycle, $a_i$ crosses the loose arc of the dicycle, a contradiction to the planarity rule.  \end{proof}

\begin{figure}[hbtp]
\centering
\includegraphics[scale=.3]{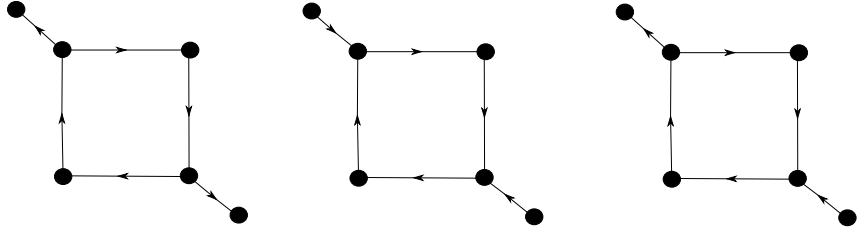}
\caption{Members of $\mathcal{I}$ (based on a $4$-dicycle)}
\label{fig:I}
\end{figure}

For members of the class $\mathcal{I}$, the length of the directed cycle is required to be at least four. Therefore, we define the class $\mathcal{T}$ to be the class of all oriented graphs $\overrightarrow{D}$ which contain a $3$-dicycle and such that there exist three arcs $a_1,a_2,a_3 \in A(\overrightarrow{D})$ having one endpoint of degree one and other endpoint $\phi(1), \phi(2), \phi(3)$, respectively, in $\overrightarrow{D_n}$, with $\overrightarrow{D} = \overrightarrow{D_n} \cup \{a_1,a_2,a_3 \}$. As with $\mathcal{I}$, the direction of each arc not contained in the directed cycle is not unique. Since the size of the dicycle in elements of $\mathcal{T}$ is restricted, we are able to easily list, in Figure~\ref{fig:T}, the four members of $\mathcal{T}$. 

\begin{lemma}
\label{T}
If $\overrightarrow{D} \in \mathcal{T}$, then $obt(\overrightarrow{D}) > 1$.
\end{lemma}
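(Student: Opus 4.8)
The plan is to mimic the proof of Lemma~\ref{I>1}, adapting it to the $3$-dicycle case where the three extra arcs attach to \emph{all three} vertices of $\overrightarrow{D}_n$. First I would assume, for contradiction, that $\overrightarrow{D} \in \mathcal{T}$ has a $1$-page oriented book embedding. Since $\overrightarrow{D}$ contains the $3$-dicycle $\overrightarrow{D}_n$ (here $n=3$), Corollary~\ref{directed cycle} tells us that in this embedding every vertex of $\overrightarrow{D}_n$ is tight except the top vertex and bottom vertex of $\overrightarrow{D}_n$ in the spine order, which are the endpoints of the single loose arc of $\overrightarrow{D}_n$. Label the three vertices of the dicycle so that $\phi(a)$ is the top vertex, $\phi(c)$ is the bottom vertex, and $\phi(b)$ is the middle one; thus $\phi(b)$ is tight, and the loose arc of $\overrightarrow{D}_n$ joins $\phi(a)$ and $\phi(c)$ and covers $\phi(b)$ (and covers nothing else on the dicycle).

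The key step is to examine the arc $a_i$ pendant at $\phi(a)$ (the top vertex of the dicycle) and the arc pendant at $\phi(c)$ (the bottom vertex). Let $u$ be the degree-one endpoint of the arc attached to $\phi(a)$. In the spine order, $u$ lies somewhere relative to the block occupied by the dicycle's vertices. If $u$ is below $\phi(a)$ in the spine, then the arc $\phi(a)u$ covers $\phi(b)$ (and possibly $\phi(c)$), and since $\phi(b)$ is also covered by the loose arc $\phi(a)\phi(c)$ while $\phi(a)u$ and $\phi(a)\phi(c)$ share the endpoint $\phi(a)$ but neither nests inside the other (one ends at $\phi(c)$, the other at $u$, with $u$ and $\phi(c)$ in different positions), the planarity rule is violated — unless $u$ coincides in position with $\phi(c)$ or lies strictly between, in which case a short case check still forces a crossing with either the loose arc or one of the tight arcs of the dicycle. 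If instead $u$ is above $\phi(a)$, so that $\phi(a)$ is \emph{not} the topmost vertex overall, then I would instead argue using the middle vertex $\phi(b)$: the pendant arc at $\phi(b)$ has its degree-one endpoint $w$ somewhere in the spine; since $\phi(b)$ is tight, covered by the loose arc $\phi(a)\phi(c)$, the arc $\phi(b)w$ must go either above $\phi(a)$ or below $\phi(c)$ to reach $w$ (as the tight arcs $\phi(a)\phi(b)$ and $\phi(b)\phi(c)$ occupy the spine immediately around $\phi(b)$), and in either case $\phi(b)w$ crosses the loose arc $\phi(a)\phi(c)$, contradicting the planarity rule. The cleanest formulation is: the pendant arc at the middle vertex $\phi(b)$ of the dicycle always forces a crossing with the loose arc of $\overrightarrow{D}_n$, since $\phi(b)$ is strictly covered by that loose arc yet its third incident arc must escape the interval spanned by the two tight dicycle arcs at $\phi(b)$.

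I expect the main obstacle to be the bookkeeping around \emph{which} of the three dicycle vertices plays the role of the middle (covered, tight) vertex, and verifying that the pendant arc there genuinely cannot be routed without a crossing in \emph{every} orientation of that pendant arc — but the direction rule only constrains which page an arc sits in, not whether a crossing occurs, so the planarity argument is orientation-agnostic and this should go through cleanly. A secondary subtlety is the exact placement of the degree-one endpoint $w$: it could in principle sit between two dicycle vertices in the spine, but since the two arcs $\phi(a)\phi(b)$ and $\phi(b)\phi(c)$ are tight (on the spine), no other vertex can lie between $\phi(a)$ and $\phi(b)$ or between $\phi(b)$ and $\phi(c)$, so $w$ must lie strictly above $\phi(a)$ or strictly below $\phi(c)$, and either location puts $\phi(b)w$ in conflict with the loose arc $\phi(a)\phi(c)$. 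With that observation the contradiction is immediate, so $obt(\overrightarrow{D}) > 1$.
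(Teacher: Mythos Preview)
The paper actually provides no proof for Lemma~\ref{T}; it is stated and immediately followed by Figure~\ref{fig:T}, the argument being left implicit as analogous to Lemma~\ref{I>1}. Your ``cleanest formulation'' is exactly that analogue and is correct: in any $1$-page oriented book embedding the $3$-dicycle forces (by Corollary~\ref{directed cycle}) two tight arcs and one loose arc, so the middle vertex $\phi(b)$ is tight and covered by the loose arc; since all three dicycle vertices carry a pendant arc in $\mathcal{T}$, in particular $\phi(b)$ does, and that pendant must be loose (both spine slots at $\phi(b)$ are already taken) with its other endpoint $w$ lying strictly above $\phi(a)$ or strictly below $\phi(c)$, forcing a crossing with the loose dicycle arc.

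Two small remarks. First, your intermediate discussion of the pendant $u$ at the top vertex $\phi(a)$ is a detour that does not close cleanly (the case split there is incomplete), but you correctly abandon it for the $\phi(b)$ argument, which is self-contained; in a final write-up just drop the $\phi(a)$ paragraph. Second, the reason all three pendants are needed in the definition of $\mathcal{T}$ (even though your contradiction uses only one) is that the embedding, not you, chooses which vertex is the middle one; with three pendants the middle vertex is guaranteed to be heavy regardless of that choice. You implicitly use this when you say ``the pendant arc at the middle vertex,'' so the logic is sound.
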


\begin{figure}[hbtp]
\centering
\includegraphics[scale=.4]{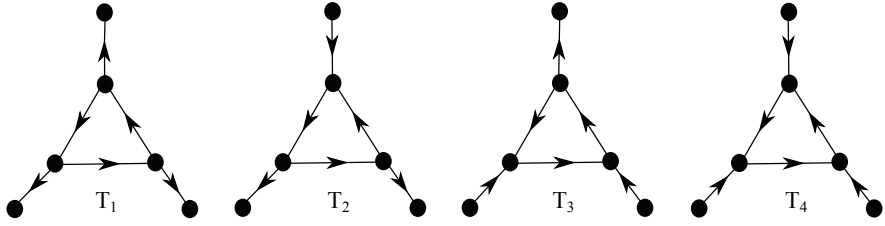}
\caption{Members of $\mathcal{T}$}
\label{fig:T}
\end{figure}

To introduce the last class, $\mathcal{R}$, of strictly uni-dicyclic graphs, we first define an oriented tree, called an \textit{antler}, using the oriented paths $S^+, S^-,$ and a standard $j$-dipath $\overrightarrow{P_j}$.  Let $s^+, s^-$ be the vertices of degree two in $S^+$, $S^-$, respectively. For an integer $j \geq 2$, we define a \textit{positive $j$-antler}, $\overrightarrow{A}_j^+ := S^+(s^+) +_1 \overrightarrow{P_j}(j)$; we also define the positive $1$-antler $A^+_1 := S^+$. We define a \textit{negative $j$-antler} $A^-_j$ to be the converse, $(A^+_j)^*$, of a positive $j$-antler. $A^+_3$ and $A^-_4$ can be seen in Figure~\ref{fig:antler}. It is important to note that if an oriented tree contains \textit{no} positive antler, it is a negative fountain tree, and similarly, if an oriented tree contains \textit{no} negative antler, it is a positive fountain tree. 

\begin{figure}[hbtp]
\centering
\includegraphics[scale=.4]{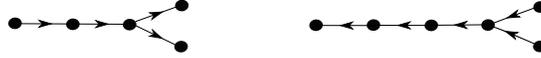}
\caption{$A^+_3$ and $A^-_4$}
\label{fig:antler}
\end{figure}

We now define $\mathcal{R}$, using a positive antler $A^+_j$, a negative antler $A^-_k$, and the standard $n$-dicycle, $\overrightarrow{D_n}$. In $\overrightarrow{D_n}$, we choose an arbitrary arc $(\phi(i), \phi(i+1))$ and call it $(x,y)$. Let the oriented graph $\overrightarrow{R_n}(j^+) := A^+_j(1) +_1 \overrightarrow{D_n}(x)$; let the oriented graph $\overrightarrow{R_n}(j^+,k^-):= A^-_k(k) +_1 \overrightarrow{R_n}(j^+)(y)$. We then define the class $\mathcal{R}$ as $\mathcal{R} := \{ \overrightarrow{R_n}(j^+,k^-) : n,j,k \in \mathbb{Z}^+, n \geq 3 \}$. Figure~\ref{fig:R} depicts $\overrightarrow{R_3}(1^+,1^-)$, $\overrightarrow{R_3}(2^+,1^-)$, and $\overrightarrow{R_3}(2^+,2^-)$.

\begin{figure}[hbtp]
\centering
\includegraphics[scale=.35]{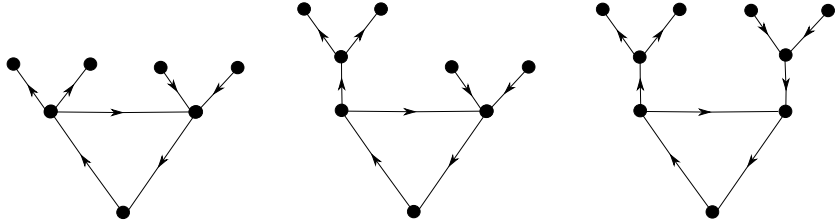}
\caption{Members of $\mathcal{R}$}
\label{fig:R}
\end{figure}

\begin{lemma} \label{R} Let $\overrightarrow{D} \in \mathcal{R}$. Then $obt(\overrightarrow{D}) > 1$. \end{lemma}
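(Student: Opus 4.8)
My plan is to argue by contradiction, assuming $\overrightarrow{D} = \overrightarrow{R_n}(j^+,k^-) \in \mathcal{R}$ has a $1$-page oriented book embedding, and to derive a violation of either the planarity rule or the direction rule. The starting point is Corollary~\ref{directed cycle}: since $\overrightarrow{D}$ contains the $n$-dicycle $\overrightarrow{D_n}$, in any $1$-page oriented book embedding the vertices of $\overrightarrow{D_n}$ must appear in a natural (cyclic) order along the spine, every arc of $\overrightarrow{D_n}$ is tight except the single loose arc $a_0$ joining the topmost and bottommost vertices of $V(\overrightarrow{D_n})$ on the spine, and that loose arc $a_0$ lies in the page. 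Because all arcs of $\overrightarrow{D_n}$ that are in the spine are directed consistently ``around'' the cycle, the spine is oriented in the direction of the dicycle, and $a_0$ (the one in the page) points the opposite way; hence the page containing $a_0$ is, say, a downwards page relative to that segment, which already constrains where other page-arcs can go.

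Next I would locate the antlers. Recall $\overrightarrow{D} = \overrightarrow{R_n}(j^+,k^-)$ is built by attaching a positive antler $A_j^+$ at $x$ and a negative antler $A_k^-$ at $y$, where $(x,y)$ is an arc of $\overrightarrow{D_n}$; so $x$ and $y$ are consecutive on the dicycle and therefore (unless one of them is an endpoint of $a_0$) consecutive on the spine. The antler $A_j^+$ contains a copy of $S^+$ whose degree-two vertex $s^+$ is a source; attaching the antler means $x$ is pulled toward the ``positive'' side. The key local fact I want is: in any $1$-page oriented book embedding, a vertex that is the attachment point of a positive antler cannot be tight with the spine oriented in a way compatible with the dicycle — more precisely, the extra arcs of the antler incident to $x$, together with the two tight dicycle-arcs at $x$, force an arc to be placed in the page on the ``wrong'' side or with the ``wrong'' direction, contradicting either the planarity rule (the antler arc would cross $a_0$ or cross a dicycle arc) or the direction rule (the antler arc points one way, $a_0$ the other, but both must lie in the single page). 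I would prove this by case analysis on whether $x$ (resp. $y$) is an endpoint of the loose arc $a_0$, is covered by $a_0$, or is outside the span of $a_0$ — much as in the proof of Lemma~\ref{I>1}, but now using the orientations: the positive antler at $x$ and the negative antler at $y$ want arcs going in opposite vertical directions, yet there is only one page.

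Concretely, the steps in order: (1) fix a hypothetical $1$-page embedding and apply Corollary~\ref{directed cycle} to pin down the spine order of $V(\overrightarrow{D_n})$, the loose arc $a_0$, and the page direction; (2) observe $x,y$ are adjacent on the dicycle, so they are adjacent on the spine unless $\{x,y\}$ meets $\{$top, bottom$\}$; (3) show that the arc of $\overrightarrow{D_n}$ from $x$ to $y$ being tight leaves no room on the spine between $x$ and $y$, so all antler arcs at $x$ and at $y$ are loose and lie in the page; (4) analyze the positive antler at $x$: its arcs force a page-arc that is upwards near $x$ and, to avoid crossing the tight dicycle arcs, must nest around vertices on one side — pin down which side; (5) symmetrically analyze the negative antler at $y$, forcing a downwards page-arc nesting on the other side; (6) combine: these two requirements, together with the fixed direction of $a_0$, cannot be simultaneously satisfied in one page — either two page-arcs of opposite direction coexist (violating the direction rule) or two page-arcs cross (violating the planarity rule). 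This yields the contradiction, so $obt(\overrightarrow{D}) > 1$.

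The main obstacle I anticipate is step (4)–(5): carefully showing that a positive antler attached at a dicycle vertex genuinely \emph{forces} a page-arc of a prescribed direction that \emph{must} interfere with $a_0$ or with the negative antler's arc. The antler is not just a single pendant arc — it contains an $S^+$ and possibly a whole dipath $\overrightarrow{P_j}$ — so I need to track how the source vertex $s^+$ of that $S^+$ and the rest of the antler get placed, ruling out clever placements that tuck the antler entirely between two consecutive dicycle vertices (impossible, since those are tight) or entirely above/below the dicycle. I expect the cleanest route is to first reduce to the ``minimal'' case $j=k=1$ (i.e.\ $\overrightarrow{R_n}(1^+,1^-)$, where each antler is just $S^+$ or $S^-$) and handle that by direct inspection of the few possible positions of the two degree-one vertices, then note that a general antler contains this minimal structure as an oriented subgraph and that adding arcs cannot decrease the oriented book thickness, so $obt(\overrightarrow{D}) \ge obt(\overrightarrow{R_n}(1^+,1^-)) > 1$. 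That monotonicity reduction should let me avoid grinding through the full dipath bookkeeping.
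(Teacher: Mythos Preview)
Your overall strategy---assume a $1$-page oriented book embedding, invoke Corollary~\ref{directed cycle} to pin down the dicycle on the spine, then argue that the two antlers force page-arcs of conflicting direction---is exactly what the paper does. The paper also treats the minimal case $\overrightarrow{R_n}(1^+,1^-)$ first and then asserts that the remaining cases are ``similar,'' so your plan through step~(6) is on track.

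The genuine gap is your proposed shortcut for passing from $j=k=1$ to general $j,k$ via subgraph monotonicity. It is \emph{not} true that $\overrightarrow{R_n}(1^+,1^-)$ is an oriented subgraph of $\overrightarrow{R_n}(j^+,k^-)$ when $j\ge 2$. By definition $\overrightarrow{R_n}(j^+):=A_j^+(1)+_1\overrightarrow{D_n}(x)$, so for $j\ge 2$ the vertex $x$ is identified with vertex~$1$ of the dipath in $A_j^+$, not with $s^+$; the $S^+$ sits at the \emph{far end} of that dipath, $j-1$ steps away from $x$. Consequently, in $\overrightarrow{R_n}(j^+,k^-)$ with $j\ge 2$ the vertex $x$ has only \emph{one} antler out-arc, whereas in $\overrightarrow{R_n}(1^+,1^-)$ it has two. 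So the minimal configuration at $x$ simply is not present as a subgraph, and the inequality $obt(\overrightarrow{R_n}(j^+,k^-))\ge obt(\overrightarrow{R_n}(1^+,1^-))$ does not follow from monotonicity. (On the negative-antler side the reduction happens to work, since $A_k^-$ is attached at vertex $k$, which carries the $S^-$; the asymmetry is in how the paper chooses the attachment vertices.)

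This means you cannot avoid the dipath bookkeeping you were hoping to skip. You will have to run your steps (4)--(5) for general $j$: after fixing the loose dicycle arc to be $(x,y)$ and the page direction, trace where the dipath $x\to v_2\to\cdots\to v_j=s^+$ can go (it must stay outside the tight dicycle block and respect the page direction), and then argue about the two out-arcs at $s^+$ exactly as in the $j=1$ case. The argument is indeed ``similar'' in the paper's sense, but it is not a free consequence of the minimal case.
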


\begin{proof} Let $\overrightarrow{D} \in \mathcal{R}$. For a contradiction, consider a $1$-page oriented book embedding of $\overrightarrow{D}$. We will first prove the statement for $\overrightarrow{D}= \overrightarrow{R_n}(1^+,1^-)$. In $\overrightarrow{R_n}(1^+,1^-)$, we have that $s^+= x$ and $s^- = y$. Let $a_1, a_2$ be the sinks of the positive antler, and let $b_1, b_2$ be the sources of the negative antler. By Corollary~\ref{directed cycle}, the spine order of the embedding must be a natural ordering of $V(\overrightarrow{D_n})$ and each arc except one, call it $l$, is tight in the embedding and each vertex in $V(\overrightarrow{D_n})$, except the endpoints of $l$, is covered by $l$ in the embedding. Therefore, no vertex \underline{not} contained in $V(\overrightarrow{D_n})$ can appear between two vertices of $\overrightarrow{D_n}$ in the spine and $l = (x,y)$; otherwise, one of $x$ or $y$, say $x$, would be covered by $l$ and the arcs  $(x,a_1), (x,a_2)$ would cross $l$. We may assume then that $x$ is below $y$ in the spine, and the direction of the page is upwards. Both $a_1$ and $a_2$ cannot appear below $x$ in the spine, otherwise, say if $a_1$ is below $a_2$ is below $x$ in the spine, then we would have an upwards arc $(x,y)$ and a downwards arc $(x,a_1)$ in the interior of the page, a contradiction to the direction rule. Therefore, since each arc between $x$ and $y$ is tight, at least of $a_1,a_2$, say $a_2$ is above $y$ in the spine. Then $y$ is covered by $(x,a_2)$. Then both $b_1$ and $b_2$ must appear between $y$ and $a_2$ in the spine, thus either $(b_1, y)$ or $(b_2,y)$ is embedded into the page, as a downwards arc. However, since $(x,y)$ is an upwards arc, we obtain a contradiction and $obt(\overrightarrow{D}) > 1$. The proofs for other members of $\mathcal{R}$ are similar. \end{proof}

%\begin{proof} For a contradiction, assume $obt(\overrightarrow{D}) = 1$. By Lemma~\ref{directed cycle}, the spine order must be a natural ordering of $\overrightarrow{C}$.  Let $x_1$ and $x_2$ be the two sinks in $\overrightarrow{D}$, and let $y_1$ and $y_2$ be the two sources in $\overrightarrow{D}$. We consider $x_1$ and $x_2$ first. By the direction rule, it must be true that the spine order is $x_1, \phi(i), \phi(i-1),...,1,n,...,\phi(i+1), x_2$ or $\phi(i), \phi(i-1),...,1,n,...,\phi(i+1), x_2, x_1$. In either case, by the planarity rule it must be true that $y_1$, $y_2$, and all vertices of the directed path between $phi(i+1)$ and $S^-$ (if it exists) are all placed into the spine between $\phi(i+1)$ and $x_2$. However, then there is a vertex $y'$ that is either $y_1$, $y_2$, or a vertex in the directed path between $\phi(i+1)$ and $S^-$ (if it exists), such that the arcs $(y', \phi(i+1))$ and $(\phi(i), \phi(i+1))$ are both embedded into the page, a contradiction to  $DR^*$. If we consider $y_1$ and $y_2$ first the argument is symmetric.  \end{proof}

%\begin{thm} Let $\overrightarrow{D} \in \mathcal{R}$. Then $\overrightarrow{D} \in \mathcal{M}^2_s$. \end{thm}

\newpage
The next theorem shows that $\mathcal{M}^2 \cap \mathcal{U}$, the class of $2$-page critical, strictly uni-dicyclic graphs, is completely characterized by the three classes of $2$-page critical graphs.

% To prove the statement, recall the definition of a strictly uni-dicyclic graph $\overrightarrow{D}$ in the class $\mathcal{U}$. The dicycle $\overrightarrow{D}_n$ has vertex set consisting of the vertices $\{\phi(i_1), \phi(i_2),... \phi(i_n) \}$. Then $\overrightarrow{D}$ is obtained by $k$ $1$-sums of $\overrightarrow{D}_n$ and $k$ distinct oriented trees $T_1, T_2,...,T_k$, each on at least two vertices, obtained by identifying vertices $t_1, t_2,...,t_k$ and $x_1, x_2,..., x_k$, respectively, where $t_j \in V(\overrightarrow{D}_n)$ and $x_j \in V(T_j)$, for $1 \le j \le k$. 

\begin{thm} \label{unicyclic_list} Let $\overrightarrow{D} \in \mathcal{U}$. Then $obt(\overrightarrow{D}) \le 1$ if and only if $\overrightarrow{D}$ contains no member of $\mathcal{T}$, $\mathcal{I}$, or $\mathcal{R}$; in other words, $\mathcal{M}^2 \cap \mathcal{U} =  \mathcal{T} \cup \mathcal{I} \cup \mathcal{R} $. \end{thm}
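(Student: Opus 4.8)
## Proof Proposal for Theorem~\ref{unicyclic_list}

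The plan is to prove the two directions separately, with the ``hard'' direction being the forward implication (if $\overrightarrow{D}$ contains no member of $\mathcal{T}$, $\mathcal{I}$, or $\mathcal{R}$, then $obt(\overrightarrow{D}) \le 1$), and the reverse implication following almost immediately from the three lemmas already established. For the reverse direction, suppose $\overrightarrow{D}$ contains a member of $\mathcal{T}$, $\mathcal{I}$, or $\mathcal{R}$ as an oriented subgraph. By Lemmas~\ref{I>1}, \ref{T}, and \ref{R}, that subgraph has oriented book thickness greater than one; since oriented book thickness is monotone under taking oriented subgraphs (deleting arcs and vertices cannot increase it), we get $obt(\overrightarrow{D}) > 1$. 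Combined with the fact that every member of $\mathcal{U}$ contains a dicycle (so $bt \ge 1$ and hence $obt \ge 1$), this shows that any such $\overrightarrow{D}$ has $obt(\overrightarrow{D}) = 2$ once we also know $obt(\overrightarrow{D}) \le 2$ — which should follow either from the planarity of these underlying graphs together with Theorem~\ref{BKthm}, or more directly from the constructive argument used in the forward direction. To conclude $\mathcal{M}^2 \cap \mathcal{U} = \mathcal{T} \cup \mathcal{I} \cup \mathcal{R}$ we must additionally check that each member of $\mathcal{T}$, $\mathcal{I}$, $\mathcal{R}$ is actually \emph{critical}: deleting any arc drops $obt$ to $1$. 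For this I would go class-by-class, observing that deleting any arc of a member of $\mathcal{T}$, $\mathcal{I}$, or $\mathcal{R}$ destroys the obstruction structure (either it breaks the dicycle, leaving an oriented tree or an oriented-tree-plus-path that is embeddable in one page by the OTSO machinery, or it removes one of the crucial pendant/antler arcs, reducing to the one-heavy-vertex situation covered by Lemma~\ref{dicycle_1_tree}).

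For the forward direction, I would argue by contrapositive: assume $\overrightarrow{D} \in \mathcal{U}$ has $obt(\overrightarrow{D}) > 1$ and produce a subgraph in $\mathcal{T} \cup \mathcal{I} \cup \mathcal{R}$. Write $\overrightarrow{D} = \bigcup_{i=1}^n (\overrightarrow{D}_n(\phi(i)) +_1 \overrightarrow{T_i}(y_i))$ in the canonical form from Section~4, and let $h$ be the number of heavy vertices. If $h = 0$ then $\overrightarrow{D} = \overrightarrow{D}_n$ is a dicycle with $obt = 1$, contradiction; if $h = 1$, Lemma~\ref{dicycle_1_tree} gives $obt(\overrightarrow{D}) = 1$, contradiction. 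So $h \ge 2$. The heart of the argument is then a careful case analysis on how the heavy trees attach to the dicycle and on their internal structure (whether each $\overrightarrow{T_i}$ is a positive fountain tree, a negative fountain tree, or contains both a positive and a negative antler). I expect the key structural input to be: (i) by Corollary~\ref{directed cycle}, in any hypothetical $1$-page embedding the dicycle forces a rigid natural spine order with exactly one loose arc $l = (\text{top},\text{bottom})$; (ii) any heavy vertex not equal to an endpoint of $l$ is covered by $l$, so its pendant arcs must escape to the extreme top or bottom of the spine, and the direction rule severely constrains which directions are available above the top vertex versus below the bottom vertex; (iii) therefore two heavy vertices at cyclic-distance $\ge 2$ already force a copy of a member of $\mathcal{I}$ (when $n \ge 4$) or of $\mathcal{T}$ (when $n = 3$, where no two dicycle vertices are at distance $\ge 2$ so \emph{all three} must be heavy), and when the two heavy vertices are adjacent the direction rule forces one of them to sprout a positive antler and the other a negative antler, yielding a member of $\mathcal{R}$.

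Concretely, the case split I would run is: \textbf{Case $n = 3$.} If fewer than three vertices of $\overrightarrow{D}_3$ are heavy we are in the $h \le 1$ situation reachable after noting that a single heavy tree still embeds (possibly after more work), so assume all three are heavy; then each carries at least one pendant arc with a degree-one endpoint, producing a subgraph isomorphic to a member of $\mathcal{T}$. \textbf{Case $n \ge 4$.} If two heavy vertices are at cyclic distance $\ge 2$, extract from each a single arc to a degree-one vertex (such an arc exists in any nontrivial oriented tree — a leaf arc) to obtain a member of $\mathcal{I}$. Otherwise all heavy vertices are pairwise adjacent on the dicycle, so (since $n \ge 4$) there are exactly two of them, say $x = \phi(i)$ and $y = \phi(i+1)$, joined by the arc $(x,y)$; here I must show that if $\overrightarrow{T}_x$ were a negative fountain tree \emph{and} $\overrightarrow{T}_y$ were a positive fountain tree then $\overrightarrow{D}$ would actually have a $1$-page embedding (contradiction), so one of them contains a positive antler and the other a negative antler in the orientation matching $(x,y)$, exhibiting a member of $\mathcal{R}$. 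The main obstacle, and where I would spend the most care, is exactly this last sub-case: proving the ``positive reduction'' — that a dicycle with two adjacent heavy vertices whose trees are a negative and a positive fountain tree respectively \emph{does} admit a $1$-page oriented book embedding. The construction should interleave the sink-fountain embedding of the negative tree (hung below the bottom of the dicycle) with the source-fountain embedding of the positive tree (stacked above the top), using that the loose arc of the dicycle is $(x,y)$ and that fountain embeddings keep their root loose; getting all the direction-rule bookkeeping consistent across the two pages' worth of arcs that collapse into one page is the delicate point, and I would isolate it as a preliminary lemma before assembling the theorem.
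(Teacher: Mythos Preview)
Your overall architecture matches the paper's: the reverse direction follows from Lemmas~\ref{I>1}, \ref{T}, \ref{R}, and the forward direction is a case analysis on the number and position of heavy vertices of the dicycle. However, there is a genuine logical gap in your adjacent-heavy-vertex case, and a smaller organizational gap for $n=3$.

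The main problem is in your ``positive reduction.'' You propose to show that if $\overrightarrow{T}_x$ is a negative (sink) fountain tree \emph{and} $\overrightarrow{T}_y$ is a positive (source) fountain tree, then $\overrightarrow{D}$ is $1$-page embeddable. But the negation of that conjunction is only ``$\overrightarrow{T}_x$ contains a positive antler \emph{or} $\overrightarrow{T}_y$ contains a negative antler,'' which is \emph{not} enough to exhibit a member of $\mathcal{R}$: a member of $\mathcal{R}$ requires \emph{both} a positive antler at $x$ \emph{and} a negative antler at $y$. Concretely, take $\overrightarrow{T}_x$ with no positive antler and $\overrightarrow{T}_y$ containing a negative antler; your reduction does not apply (since $\overrightarrow{T}_y$ is not a source fountain tree), yet no member of $\mathcal{R}$ sits inside $\overrightarrow{D}$, so your argument stalls. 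The fix, and what the paper actually does, is to prove the stronger asymmetric statement: if $\overrightarrow{T}_x$ has no positive antler then $\overrightarrow{D}$ is $1$-page embeddable \emph{regardless of} $\overrightarrow{T}_y$. The trick is to first embed $\overrightarrow{D_n}\cup\overrightarrow{T}_y$ (with $\overrightarrow{T}_y$ arbitrary) via Lemma~\ref{dicycle_1_tree}, leaving $x$ half-loose at the bottom, and only then use the sink-fountain structure of $\overrightarrow{T}_x$ to hang it below $x$. With this stronger reduction, the contrapositive gives exactly ``$\overrightarrow{T}_x$ has a positive antler \emph{and} $\overrightarrow{T}_y$ has a negative antler,'' i.e.\ a member of $\mathcal{R}$.

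Secondarily, your Case $n=3$ is misrouted: ``fewer than three heavy vertices'' includes $h=2$, which is not the $h\le 1$ situation covered by Lemma~\ref{dicycle_1_tree}. For $n=3$ any two vertices are adjacent, so $h=2$ on a $3$-dicycle falls squarely into the adjacent-heavy-vertex analysis above (and hence into the $\mathcal{R}$ case), not into the one-heavy-vertex lemma. The paper avoids this by not splitting on $n$ at all: it argues directly that the absence of $\mathcal{T}$ and $\mathcal{I}$ forces at most two heavy vertices which must be the endpoints of a single dicycle arc $(x,y)$, and then runs the (corrected) reduction uniformly in $n$.
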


\begin{proof} To prove the necessary condition, consider the contrapositive, which states: If $\overrightarrow{D}$ contains a member of $\mathcal{I}$, $\mathcal{T}$, or $\mathcal{R}$ then $obt(\overrightarrow{D}) > 1$. This is true by Corollary~\ref{I>1}, Corollary~\ref{T}, and Lemma~\ref{R}. To prove the sufficient condition, let $\overrightarrow{D}$ be a strictly uni-dicylic graph containing no member of $\mathcal{I}$, $\mathcal{T}$, or $\mathcal{R}$, and we now construct a $1$-page oriented book embedding of $\overrightarrow{D}$. Let $\overrightarrow{D}$ contain an $n$-dicycle $\overrightarrow{D}_n$. Since $\overrightarrow{D}$ contains no member of $\mathcal{T}$ or $\mathcal{I}$ as an oriented subgraph, $\overrightarrow{D_n}$ has at most two heavy vertices $x$ and $y$, such that the arc $(x,y) \in A(\overrightarrow{D_n})$.  

Since $\overrightarrow{D}$ contains no member of $\mathcal{R}$, either $\overrightarrow{T_x}$ does not contain a positive antler, or $\overrightarrow{T_y}$ does not contain a negative antler. We may assume that $\overrightarrow{T_x}$ does not contain a positive antler. We first consider a $1$-page oriented book embedding of $\overrightarrow{D_n} \cup \overrightarrow{T_y}$, guaranteed by Lemma~\ref{dicycle_1_tree}. Since $x$ has degree two in $\overrightarrow{D_n} \cup \overrightarrow{T_y}$, $x$ is half-loose in the spine. We now put $\overrightarrow{T_x}$ back into $\overrightarrow{D}$. Since $\overrightarrow{T_x}$ contains no positive antler, $|N^+(x)| \le 1$. If $|N^+(x)| =0$, then $x$ is a sink, and by Lemma~\ref{tree_sink}, there is sink oriented book embedding of $\overrightarrow{T_x}$ with spine order $(x; \alpha_x)$. Thus place $\alpha_x$ below $x$ in the spine and we are done. If $|N^+(x)| =1$, then $\overrightarrow{T_x}$ must be a sink fountain tree, and there is a sink fountain oriented book embedding of $\overrightarrow{T_x}$ with spine order $(x; \gamma_{x})^f$. Thus place $\gamma_{x}$ below $x$ in the spine and we are done. In either case, we obtain a $1$-page oriented book embedding of $\overrightarrow{D}$. The case in which $\overrightarrow{T_y}$ does not contain a negative antler is similar.

 \end{proof}

\end{document}